\newfont{\msbm}{msbm10 at 11pt}
\newcommand {\R} {\mbox{\msbm R}}
\newcommand {\Z} {\mbox{\msbm Z}}
\newcommand {\N} {\mbox{\msbm N}}
\newcommand {\E} {\mbox{\msbm E}}
\newfont{\msbmsm}{msbm10 at 8pt}
\newcommand {\Zsm} {\mbox{\msbmsm Z}}
\def\P{\mathbb{P}}
\def\E{\mathbb{E}}
\def\eps{\varepsilon}
\def\ep{\epsilon}
\def\cV{\mathcal{V}}
\def\cT{\mathcal{T}}
\def\cG{\mathcal{G}}
\def\cF{\mathcal{F}}
\def\cE{\mathcal{E}}
\def\cD{\mathcal{D}}
\def\cC{\mathcal{C}}
\def\cB{\mathcal{B}}
\def\cA{\mathcal{A}}
\newcommand{\indic}[1]{\mathbf{1}_{\{#1\}}}
\def\be{\begin{equation}}
\def\ee{\end{equation}}
\def\ba{\begin{align}}
\def\ea{\end{align}}
\def\Var{\textup{Var}}
\newtheorem{Theo}{Theorem}
\newtheorem{Lemma}[Theo]{Lemma}
\newtheorem{Cor}[Theo]{Corollary}
\newtheorem{Prop}[Theo]{Proposition}
\newtheorem{Rmk}[Theo]{Remark}
\begin{document}
\title{Survival of near-critical branching Brownian motion}
{\author{by Julien Berestycki\thanks{Supported by the \emph{Agence Nationale de la
Recherche} grants ANR-08-BLAN-0220-01 and ANR-08-BLAN-0190.}, Nathana\"el Berestycki\thanks{Supported by EPSRC grant EP/G055068/1}, and Jason Schweinsberg\thanks{Supported in part by NSF Grant DMS-0805472} \\
{\normalsize Universit\'e Paris VI, University of Cambridge, and University of California at San Diego}}
\maketitle

\begin{abstract}
Consider a system of particles performing branching Brownian motion with negative drift $\mu = \sqrt{2 - \eps}$ and killed upon hitting zero. Initially there is one particle at $x>0$. Kesten \cite{kesten} showed that the process survives with positive probability if and only if $\eps>0$. Here we are interested in the asymptotics as $\eps\to 0$ of the survival probability $Q_\mu(x)$. It is proved that if $L= \pi/\sqrt{\eps}$ then for all $x \in \R$, $\lim_{\eps \to 0} Q_\mu(L+x) = \theta(x) \in (0,1)$ exists and is a travelling wave solution of the Fisher-KPP equation. Furthermore, we obtain sharp asymptotics of the survival probability when $x<L$ and $L-x \to \infty$. The proofs rely on probabilistic methods developed by the authors in \cite{bbs}. This completes earlier work by Harris, Harris and Kyprianou \cite{hhk06} and confirms predictions made by Derrida and Simon \cite{ds07}, which were obtained using nonrigorous PDE methods.
\end{abstract}
\newpage

\section{Introduction}

\subsection{Main results}
Consider branching Brownian motion started at $x > 0$, in which each particle splits into two at rate one, drifts to the left at rate $\mu$, and is killed upon reaching the origin.  Kesten \cite{kesten} showed that the process dies out almost surely if $\mu \geq \sqrt{2}$ and survives forever with positive probability if $\mu < \sqrt{2}$.  We consider here the probability $Q_{\mu}(x)$ of survival when $\mu < \sqrt{2}$.  Let $\eps  = {2} - \mu^2 > 0$, and choose $L$ such that $1 - \mu^2/2 - \pi^2/2L^2 = 0$.  That is, we have $L = \pi/\sqrt{\eps}$.
The main objective of this paper is to prove the following results concerning the asymptotics of $Q_{\mu}(x)$ as $\eps \rightarrow 0$.

\begin{Theo}\label{T:kpplim}
There is a function $\theta: \R \rightarrow (0,1)$ such that $$\lim_{\eps \rightarrow 0} Q_{\mu}(L + \alpha) = \theta(\alpha)$$ for all $\alpha \in \R$.  The function $\theta$ satisfies the differential equation
\begin{equation}\label{kolmeq}
\frac{1}{2} \theta''  = \sqrt{2} \theta' - \theta(1 - \theta)
\end{equation}
with the boundary conditions $\lim_{\alpha \rightarrow \infty} \theta(\alpha) = 1$ and $\lim_{\alpha \rightarrow -\infty} \theta(\alpha) = 0$.
\end{Theo}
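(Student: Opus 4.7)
The plan is to combine an ODE analysis of $Q_\mu$ with probabilistic estimates from \cite{bbs}. A standard one-step (Feynman--Kac) argument shows that the survival probability solves
\begin{equation*}
\tfrac{1}{2} Q_\mu'' - \mu Q_\mu' + Q_\mu(1-Q_\mu) = 0, \qquad Q_\mu(0)=0,\quad \lim_{x\to\infty}Q_\mu(x)=1,
\end{equation*}
and is monotone increasing in $x$. Setting $\theta_\eps(\alpha):=Q_\mu(L+\alpha)$, the family $\{\theta_\eps\}_{\eps>0}$ takes values in $[0,1]$ and satisfies the same ODE with $\mu=\sqrt{2-\eps}\to \sqrt{2}$. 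Bootstrapping the ODE yields uniform $C^2$ bounds on every compact set, so by Arzel\`a--Ascoli each sequence $\eps_n\to 0$ admits a subsequence along which $\theta_{\eps_n}\to\theta$ locally uniformly, where $\theta$ is monotone, valued in $[0,1]$, and solves the limit equation \eqref{kolmeq}.

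The task then reduces to (a) verifying the boundary conditions $\theta(-\infty)=0$ and $\theta(+\infty)=1$, and (b) resolving the translation ambiguity of the Fisher--KPP travelling wave of speed $\sqrt{2}$. The condition $\theta(+\infty)=1$ is the easy direction: for $\alpha$ large and any small $\eps$, a short initial burst of branching produces, with probability near $1$, many descendants that remain well above $0$, and an independence/comparison argument then forces the survival probability up to $1$. The condition $\theta(-\infty)=0$ is the first delicate point and crucially uses that $L=\pi/\sqrt{\eps}$ is precisely the critical height: on a strip $[0,L-a]$ with $a$ large, the first Dirichlet eigenvalue of the drifted Laplacian exceeds the branching rate, the BBM restricted to such a strip dies out with overwhelming probability, and the spine decomposition and truncated second-moment estimates developed in \cite{bbs} quantify this extinction uniformly in $\eps$.

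Once both boundary conditions hold, any subsequential limit is a genuine monotone travelling wave of speed $\sqrt{2}$, and all such profiles are translates of a single one; it therefore suffices to identify $\lim_{\eps\to 0}\theta_\eps(\alpha_0)$ at a single value $\alpha_0$. Here one invokes the sharp near-critical survival estimate underlying the $L-x\to\infty$ asymptotics advertised in the abstract, applied at a chosen crossover scale, to pin down this value and hence the translation. The main obstacle, permeating both (a) and (b), is producing the $\eps$-uniform survival estimates just below $L$: the process is genuinely near-critical there, lives on the characteristic time scale $L^3\sim \eps^{-3/2}$, and the analysis requires the full strip/spine machinery of \cite{bbs}. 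Once the translation is fixed, subsequential convergence plus uniqueness of the limit upgrades to convergence of the entire family, completing the proof.
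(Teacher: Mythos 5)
Your approach is genuinely different from the paper's: you pass to subsequential limits in the ODE satisfied by $Q_\mu$, whereas the paper never differentiates $Q_\mu$ at all. Instead it stops the process when particles hit $L-y$, uses Lemma~\ref{L:approxNy} to show that the resulting configuration of $N_y$ particles at $L-y$ satisfies the hypotheses of Proposition~\ref{P:lim_prob_ext}, and concludes that the extinction probability converges to $\E\bigl[\exp(-\alpha\pi\sqrt{2}\,e^{\sqrt{2}x}W)\bigr]$ with $W$ Neveu's limit variable and $\alpha=e^{-a/2\pi^2}$. The travelling-wave equation then appears through Neveu's identity \eqref{LapTW}, which already identifies $u\mapsto\E[e^{-e^{\sqrt{2}u}W}]$ as a solution of \eqref{kolmeq2}; the limit is therefore an explicit translate of the wave from the outset, and the translation ambiguity, the boundary conditions, and the existence of the full-family limit are all settled in one stroke.

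This is exactly where your plan has a gap. After Arzel\`a--Ascoli, every subsequential limit is $\psi_0(\cdot+c)$ for \emph{some} $c$; to upgrade to convergence of the full family you must show $c$ does not depend on the subsequence, which requires knowing that $\lim_{\eps\to0}\theta_\eps(\alpha_0)$ exists and lies in $(0,1)$ for at least one fixed $\alpha_0$ --- i.e.\ essentially the content of the theorem itself. You propose to extract this from ``the sharp near-critical survival estimate underlying the $L-x\to\infty$ asymptotics,'' but this is circular: in the paper those asymptotics (Theorem~\ref{T:survivalthm}) are \emph{deduced from} Theorem~\ref{T:kpplim} through the factor $Q_\mu(L-\alpha)\to\theta(-\alpha)$ in Lemma~\ref{L: Qmu(x) = E(R) Qmu(L-alpha)}. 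Even ignoring that dependency, those estimates apply only in the regime $L-x\to\infty$, where $\theta_\eps\to0$; they reconfirm the boundary condition $\theta(-\infty)=0$ but give no finite positive value at a fixed $\alpha_0$. What is missing is an independent probabilistic identification of the limit at one fixed point, and that is precisely what the CSBP/Neveu computation supplies. (A smaller concern: calling $\theta(+\infty)=1$ ``the easy direction'' is optimistic --- for fixed $\alpha$ and $\eps\to0$ the starting height $L+\alpha$ tends to infinity while the drift tends to the critical $\sqrt2$, so the required uniform-in-$\eps$ lower bound on $Q_\mu(L+\alpha)$ is just as delicate as the other boundary condition, and does not follow from the fixed-$\mu$ tail asymptotics of Harris--Harris--Kyprianou.)
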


Theorem \ref{T:kpplim} does not fully determine the value of $\theta(\alpha)$ because solutions to (\ref{kolmeq}) are unique only up to a translation.  However, from Theorem \ref{T:kpplim} and the fact that $x \mapsto Q_{\mu}(x)$ is increasing, we immediately obtain the following corollary.
\begin{Cor}
Let $f:(0, \infty) \rightarrow \R$ be a function such that $\lim_{\eps \rightarrow 0} f(\eps) = \infty$.  Then $$\lim_{\eps \rightarrow 0} Q_{\mu}(L + f(\eps)) = 1$$ and $$\lim_{\eps \rightarrow 0} Q_{\mu}(L - f(\eps)) = 0.$$
\end{Cor}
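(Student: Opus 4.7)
The plan is to deduce both limits by sandwiching $Q_\mu(L \pm f(\eps))$ between values of the form $Q_\mu(L + \alpha)$ for fixed $\alpha \in \R$, to which Theorem \ref{T:kpplim} applies directly. The only extra ingredient beyond the theorem is the monotonicity of $x \mapsto Q_\mu(x)$, which follows from a straightforward coupling: starting the process at $x' > x$ is equivalent to shifting the trajectories of the process started at $x$ upward by $x' - x$, which only delays (or prevents) absorption at the origin, so any surviving configuration from below remains surviving after the shift.

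For the first limit, fix any $\alpha \in \R$. Since $f(\eps) \to \infty$, for all sufficiently small $\eps$ we have $f(\eps) \geq \alpha$, and then by monotonicity $Q_\mu(L + f(\eps)) \geq Q_\mu(L + \alpha)$. Taking $\liminf_{\eps \to 0}$ and applying Theorem \ref{T:kpplim} gives $\liminf_{\eps \to 0} Q_\mu(L + f(\eps)) \geq \theta(\alpha)$. Since $\alpha$ is arbitrary and $\theta(\alpha) \to 1$ as $\alpha \to \infty$, combined with the trivial upper bound $Q_\mu \leq 1$, the first conclusion follows.

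The second limit is symmetric: for fixed $\alpha$, eventually $-f(\eps) \leq \alpha$, so monotonicity gives $Q_\mu(L - f(\eps)) \leq Q_\mu(L + \alpha)$. Taking $\limsup_{\eps \to 0}$ yields $\limsup_{\eps \to 0} Q_\mu(L - f(\eps)) \leq \theta(\alpha)$, and then letting $\alpha \to -\infty$ and using $\theta(\alpha) \to 0$ together with $Q_\mu \geq 0$ finishes the argument.

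I do not anticipate any serious obstacle; the proof is essentially a two-line sandwich. The only point worth flagging is the legitimacy of iterating the limits ($\eps \to 0$ first, then $\alpha \to \pm\infty$), which is valid because for each fixed $\alpha$ the comparison inequality holds for all sufficiently small $\eps$, so one can pass to $\liminf$/$\limsup$ in $\eps$ before sending $\alpha$ to the appropriate infinity.
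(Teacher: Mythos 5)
Your proof is correct and follows the same route the paper indicates: Theorem~\ref{T:kpplim} combined with the monotonicity of $x \mapsto Q_\mu(x)$ and the boundary conditions $\theta(\alpha)\to 1$ as $\alpha\to\infty$, $\theta(\alpha)\to 0$ as $\alpha\to-\infty$. The paper simply asserts the corollary follows ``immediately'' from these two facts, while you have supplied the (standard) coupling argument for monotonicity and spelled out the sandwich; nothing is missing.
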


The next theorem establishes more precise asymptotics for the survival probability when $x$ is much smaller than $L$, in which case the probability of survival tends to zero.  For this result, we allow $x$ to be a function of $\eps$, as long as $L - x \rightarrow \infty$ as $\eps \rightarrow 0$.  The result therefore applies, for example, when $x$ is a fixed number, or when $x = L^{\alpha}$ for $0 < \alpha < 1$, or when $x = L - \log L$.

\begin{Theo}\label{T:survivalthm}
There exists a constant $C$ such that if $L - x \rightarrow \infty$ as $\eps \rightarrow 0$, then $$Q_{\mu}(x) \sim C L e^{-\mu (L - x)} \sin \bigg( \frac{\pi x}{L} \bigg),$$
where $\sim$ means that the ratio of the two sides tends to one as $\eps \rightarrow 0$.
\end{Theo}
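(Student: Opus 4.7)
The plan follows the probabilistic strategy of \cite{bbs}. The central object is the eigenfunction
$$\varphi(x) = e^{\mu x}\sin(\pi x/L)$$
of the linearised FKPP operator $\frac{1}{2}\partial_x^2 - \mu\partial_x + 1$, which vanishes at both $0$ and $L$ precisely because of the defining relation $L = \pi/\sqrt{\eps}$. Consequently the BBM with drift $-\mu$ killed at both $0$ and $L$ is exactly critical, and hence dies out almost surely; in particular, for the process killed only at $0$, survival forces some particle to reach level $L$. Call a particle \emph{first-passage} if it is the first in its ancestral line to hit $L$, and let $N_L$ denote the number of such particles. The strong Markov property, applied successively at the first-passage times, splits off independent BBMs started from $L$, each surviving with probability $Q_\mu(L)$, so that
$$Q_\mu(x) = 1 - \E_x\bigl[(1 - Q_\mu(L))^{N_L}\bigr].$$
By Theorem \ref{T:kpplim}, $Q_\mu(L) \to \theta(0) \in (0,1)$, so the problem reduces to controlling the law of $N_L$ under $\P_x$ in the regime $L - x \to \infty$.

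In this regime $\P_x(N_L \geq 1) \to 0$, and the analysis proceeds in two steps, both adapted from \cite{bbs}. First, a sharp first-moment estimate for a truncated count $N_L^{(T)}$, obtained by stopping the BBM at a time horizon $T$ of polynomial order in $L$: using the non-negative martingale $\sum_v \varphi(X_v(t))$ (sum over particles still inside $(0,L)$), and a spine decomposition under the associated $\varphi$-tilt, one shows
$$\E_x[N_L^{(T)}] \sim c_1\, L\, e^{-\mu(L-x)} \sin(\pi x/L)$$
for an explicit constant $c_1$. Second, a ballot-type second-moment bound: conditional on one spine reaching $L$, the probability that a second independent spine also does so before time $T$ is controllable, which yields $\E_x[N_L^{(T)}(N_L^{(T)}-1)] = O(L\cdot \E_x[N_L^{(T)}])$. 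Combined, these moment estimates give, for each fixed $s \in (0,1)$,
$$1 - \E_x\bigl[s^{N_L^{(T)}}\bigr] \sim g(s)\, L\, e^{-\mu(L-x)}\sin(\pi x/L),$$
where $g$ is an explicit generating function produced by the joint spine analysis. Evaluating at $s = 1 - Q_\mu(L)$, together with an argument that the truncation is negligible as $T \to \infty$, then yields the stated asymptotic with $C = g(1-\theta(0))$.

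\textbf{Main obstacle.} The principal difficulty is the moment analysis. Because $L$ is critically tuned so that $\varphi(L) = 0$, the naive many-to-one formula $\E_x[N_L] = \E_x^{\mathrm{BM}}\bigl[e^{T_L}\indic{T_L < T_0}\bigr]$ is already infinite, reflecting the fact that once one first-passage occurs, typically infinitely many more follow; regularising this via time truncation at the right scale and matching the contributions of short and long spine excursions to $L$ is delicate. Equally, the ``two-spine'' comparison underlying the second-moment bound must be carried out on $(0,L)$ with both Dirichlet endpoints critical, which is exactly the regime where the ballot estimates of \cite{bbs} apply. Once these estimates are in hand, the passage to the limit and the identification of the constant via $\theta(0)$ are comparatively routine.
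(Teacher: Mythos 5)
Your proposal differs from the paper's proof in a crucial choice: you aim to count first passages to the critical level $L$ itself, whereas the paper lowers the barrier to $L-\alpha$ for a fixed $\alpha>0$, works at that level, and only sends $\alpha\to\infty$ at the very end. This is not a cosmetic difference, and the gap in your route sits exactly where you flag the "main obstacle": the first-moment estimate at level $L$ does not stabilise. If $R_\alpha$ denotes the number of particles hitting $L-\alpha$, the paper's Lemma~\ref{L: E(R)} gives $\E[R_\alpha]\sim \frac{e^{\sqrt2\alpha}}{\pi\sqrt2\alpha}\,L\,e^{-\mu(L-x)}\sin(\pi x/L)$; the prefactor $e^{\sqrt2\alpha}/\alpha\to\infty$ as $\alpha\downarrow 0$, so $\E_x[N_L]$ is genuinely divergent, not merely ill-normalised. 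A time-truncated $N_L^{(T)}$ therefore cannot satisfy $\E_x[N_L^{(T)}]\sim c_1 L e^{-\mu(L-x)}\sin(\pi x/L)$ \emph{and} have the truncation become negligible as $T\to\infty$: those two claims are mutually inconsistent.

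The second-moment claim compounds this. You posit $\E_x[N_L^{(T)}(N_L^{(T)}-1)]=O\bigl(L\,\E_x[N_L^{(T)}]\bigr)$, whereas the paper's Lemma~\ref{L:E(R /R>0)} gives the much sharper $\E[R_\alpha^2]\le (C+C_{N,\alpha})\E[R_\alpha]$, with $C$ independent of $\alpha$ and $N$. With an $O(L)$ factor, the Bonferroni (or generating-function) error term is of order $L$ times the main term, so evaluating at $s=1-Q_\mu(L)$ with $1-s=\theta(0)+o(1)$ bounded away from $0$ gives you no asymptotic at all. The mechanism the paper uses to close this loop is precisely what your route discards: because the barrier is at $L-\alpha$, the relevant factor is $q=Q_\mu(L-\alpha)$, which by Theorem~\ref{T:kpplim} tends to $\theta(-\alpha)\to 0$ as $\alpha\to\infty$, and it is the smallness of $q$ (combined with the tight second moment) that makes $\P(S>0)\sim \E[S]=q\,\E[R_\alpha]$. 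The constant $C$ then emerges as the limit of $\alpha^{-1}e^{\sqrt2\alpha}\theta(-\alpha)/(\pi\sqrt2)$, which exists by consistency since $Q_\mu(x)$ does not depend on $\alpha$. To repair your proposal you would either have to work at $L-\alpha$ as the paper does, or produce a genuinely new analysis of the law of $N_L$ on $\{N_L\ge1\}$ (not just moments), since once a particle hits $L$ the count has a heavy tail and first-/second-moment bounds alone cannot extract the probability.
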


Finally, we present a result which shows that, if initially there is one particle at $x=L+\alpha$, where $\alpha \in \R$, so that the probability that the process survives forever is bounded between 0 and 1, then the descendant particles quickly settle to the stable configurations discussed in \cite{bbs}. These are precisely the configurations that are needed to apply Proposition 1 and Theorem 2 in \cite{bbs}.

Let $Z_\eps(t) = \sum_{i=1}^{M_\eps(t)} e^{\mu X_i(t)} \sin( \pi X_i(t)/L) \indic{X_i(t) \leq L}$, where $(X_i(t))_{1 \le i \le M_\eps(t)}$ denotes the set of active particles at time $t$. Likewise, denote
$Y_\eps(t) = \sum_{i=1}^{M_\eps(t)} e^{\mu X_i(t)} $.

\begin{Prop}\label{P:ic} Let $t =  c L^2$, where $c>0$ is arbitrary. Then if initially there is one particle at $x = L+\alpha$, there exists a nonnegative random variable $W$, with Laplace transform given by \eqref{LapTW}, such that
\begin{equation}\label{E:icZ}
\frac{Z_\eps(t)}{\eps^{1/2} \exp({\pi \sqrt{2} \eps^{-1/2} }) } \to  2\pi^2e^{\sqrt{2}\alpha} {W}.
\end{equation}
in distribution, while
\begin{equation}\label{E:icY}
\frac{Y_\eps(t)}{\exp({\pi \sqrt{2} \eps^{-1/2} }) }  \to 0
\end{equation}
in probability.
\end{Prop}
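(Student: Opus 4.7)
The plan is to split the time $t = cL^2$ as $t = s_\eps + (t - s_\eps)$, with $s_\eps$ chosen so that $s_\eps \to \infty$ but $s_\eps / L^2 \to 0$ as $\eps \to 0$. On the first subinterval $[0, s_\eps]$, I would analyze the descent of the descendants of the initial particle at $L + \alpha$ from above the barrier $L$ into the interval $[0, L]$; on the second subinterval $[s_\eps, cL^2]$, the configuration in $[0, L]$ has reached a ``stable'' profile and one can invoke Proposition~1 and Theorem~2 of~\cite{bbs} to conclude. The fact that any $c > 0$ is admissible is then a sanity check that the residual drift after the first phase is negligible.

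For the first phase, I would study the process restricted to $(L, \infty)$ as an auxiliary BBM with drift $-\mu \approx -\sqrt{2}$ on the half-line, absorbing at $L$. By Theorem~\ref{T:survivalthm} applied to a bounded starting point, this auxiliary process becomes extinct with probability tending to $1$ as $\eps \to 0$, so almost surely every descendant eventually crosses back into $[0, L]$, producing a finite cloud of first-crossings. The limit distribution of this cloud, rescaled appropriately, is governed by the critical additive/derivative martingale of BBM on a half-line, and translating the starting point from $L$ to $L + \alpha$ multiplies it by $e^{\sqrt{2}\alpha}$; this is the origin of the exponential prefactor in~\eqref{E:icZ}. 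I would identify $W$ with the normalized total $Z$-weight of this cloud and derive its Laplace transform~\eqref{LapTW} from the natural branching fixed-point equation obtained just above~$L$.

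For the second phase, the configuration at time $s_\eps$ consists of descendants in $[0, L]$ whose random total $Z$-weight is approximately $e^{\sqrt{2}\alpha} W$ in the rescaling of~\eqref{E:icZ}. Proposition~1 and Theorem~2 of~\cite{bbs} then say that, for any stable initial configuration with $Z$-weight $z$, the rescaled $Z_\eps(cL^2 - s_\eps)$ converges in distribution to $2\pi^2 z$ with the normalization $\eps^{1/2} \exp(\pi\sqrt{2}/\sqrt{\eps})$; conditioning on the configuration at time $s_\eps$ and using linearity therefore yields~\eqref{E:icZ}. The statement~\eqref{E:icY} follows from the companion first-moment estimate in~\cite{bbs}: in a stable profile the unweighted exponential sum $Y_\eps$ is carried by particles not concentrated near $L$, and the resulting bound is of order strictly smaller than $\exp(\pi\sqrt{2}/\sqrt{\eps})$ in probability.

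The main obstacle is the matching step at time $s_\eps$: one must show that the configuration in $[0, L]$ at time $s_\eps$ (produced by descent from $L + \alpha$, followed by a short diffusive period) truly falls within the class of stable initial conditions admitted by Proposition~1 of~\cite{bbs}, and not merely in a weak sense but strongly enough that the convergence to $2\pi^2 e^{\sqrt{2}\alpha} W$ survives the joint limit in $\eps$ and $s_\eps$. This requires quantitative control of where descendants first cross $L$, a tightness argument for the subsequent diffusive equilibration inside $[0, L]$, and a careful treatment of the error incurred by replacing the true process above $L$ with the auxiliary absorbing-barrier process used to define $W$.
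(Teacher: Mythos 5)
Your two-phase decomposition (first descend through the boundary, then stabilise) is conceptually the same as the paper's, and your identification of $W$ and the factor $e^{\sqrt{2}\alpha}$ via a translation argument is the right intuition. However, two of your key steps are implemented differently from the paper and, as stated, would not go through without substantial repair.

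First, you propose to absorb the auxiliary process at $L$ itself and read off $W$ from the resulting cloud of first-crossings. This runs into an immediate problem: a particle sitting exactly at $L$ contributes $e^{\mu L}\sin(\pi L/L)=0$ to $Z_\eps$, so the cloud of first-crossings at $L$ carries no $Z$-weight, and one would have to separately control how this cloud re-spreads into $(0,L)$ before one sees any signal. The paper sidesteps this by placing the absorbing level at $L-y_N$ with $y_N\to\infty$ slowly (so that $y_N=o(L)$ and the times $t_1\le\dots\le t_{N_y}$ at which the $N_y$ crossings occur satisfy $t_{N_y}/(\log N)^2\to0$ in probability). At that level the $Z$-weight per crossing is $e^{\mu(L-y)}\sin(\pi y/L)\sim\pi\sqrt{2}\,N(\log N)^2\,y e^{-\sqrt{2}y}$, and Neveu's theorem gives $ye^{-\sqrt{2}y}N_y\to W$ directly (Lemma~\ref{L:approxNy}). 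The choice $y\to\infty$ is not a technicality but the mechanism that makes the cloud carry the correct deterministic normalisation.

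Second, you invoke Proposition~1 and Theorem~2 of \cite{bbs} for the second phase and assert that at time $cL^2-s_\eps$ the rescaled $Z_\eps$ converges to $2\pi^2 z$ for a ``stable'' initial configuration of $Z$-weight $z$. Those results operate on the time scale $(\log N)^3$, whereas $cL^2\sim\tfrac{c}{2}(\log N)^2$ is an asymptotically vanishing fraction of that scale; Proposition~1 of \cite{bbs} therefore only tells you about the CSBP at macroscopic times $t>0$, not about what happens at time $o((\log N)^3)$, and pushing $t\to0$ requires a separate uniform-continuity statement that you do not supply. The paper avoids this entirely: it conditions on the sigma-field generated by $N_y$ and the crossing times, notes that (with high probability, after killing particles that would re-hit $L$, justified by Lemma~\ref{newRlem}) $\hat Z^i(t)$ is a martingale with conditional expectation $Z^i(0)$, bounds the conditional variance using Lemma~9 of \cite{bbs}, and concludes by Chebyshev. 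No CSBP limit is needed at this stage. A similar first-moment estimate (using (18) of \cite{bbs}) gives \eqref{E:icY}, rather than the qualitative ``carried by particles not near $L$'' heuristic you offer. To repair your proof you would need to replace the appeal to Proposition~1 by the direct martingale/second-moment argument and to absorb at $L-y_N$ rather than at $L$.
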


As an immediate consequence of Proposition \ref{P:ic} and \cite[Theorem 2]{bbs}, we obtain the following corollary. Let $n\ge 1$, and let $T>0$ be fixed.
Consider the coalescent process defined as follows.  Choose $n$ particles uniformly at random from the population at time $\eps^{-3/2} T$, and label these particles at random by the integers $1, \dots, n$.  For $0 \leq s \leq  T$, define $\Pi_\eps(s)$ to be the partition of $\{1, \dots, n\}$ such that $i$ and $j$ are in the same block of $\Pi_\eps(s)$ if and only if the particles labeled $i$ and $j$ are descended from the same ancestor at time $(T - s)\eps^{-3/2}$. Let $(\Pi(s), s \ge 0)$ denote the Bolthausen-Sznitman coalescent restricted to $n$ particles. See, e.g., \cite{bbs} for more precise definitions and \cite{ensaios} for background on coalescence.

\begin{Cor}
The processes $(\Pi_\eps(s), 0 \le s <T)$ converge as $\eps \to 0$, in the sense of finite-dimensional distributions, to the Bolthausen-Sznitman coalescent $\left(\Pi\left( \frac{ s}{\pi^2 \sqrt{2}} \right), 0 \leq s <T\right)$.
\end{Cor}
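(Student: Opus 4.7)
The plan is to reduce the Corollary to a direct application of Theorem~2 of \cite{bbs}, using Proposition~\ref{P:ic} to verify the initial condition that \cite{bbs} requires. Theorem~2 of \cite{bbs} asserts that, starting from any configuration of particles for which a suitable normalization of $Z_\eps$ converges to a positive limit and $Y_\eps$ is of smaller order than $e^{\pi\sqrt{2}\eps^{-1/2}}$, the ancestral partition read off at time scale $\eps^{-3/2}$ converges to the Bolthausen--Sznitman coalescent, up to an explicit linear time change. Proposition~\ref{P:ic} is precisely the statement that, after running branching Brownian motion from a single particle at $L+\alpha$ for a time $t_0 := cL^2$, the population enters such a stable configuration, with $Z_\eps(t_0)/(\eps^{1/2} e^{\pi\sqrt{2}\eps^{-1/2}})$ converging in distribution to $2\pi^2 e^{\sqrt{2}\alpha}W$ while $Y_\eps(t_0)/e^{\pi\sqrt{2}\eps^{-1/2}}\to 0$ in probability.

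The steps are as follows. First, apply the Markov property at time $t_0 = cL^2 = c\pi^2/\eps$. By Proposition~\ref{P:ic}, the configuration at time $t_0$ satisfies the hypotheses of \cite[Theorem~2]{bbs} almost surely on the event $\{W>0\}$, which in the limit coincides with survival. Second, apply Theorem~2 of \cite{bbs} to this configuration, which produces a Bolthausen--Sznitman coalescent run on the time scale $\eps^{-3/2}$ with a speed constant equal to the reciprocal of the asymptotic rate of multiple mergers identified in \cite{bbs}; this is where the factor $1/(\pi^2\sqrt{2})$ enters. Third, observe that the initial delay $t_0$ is of order $\eps^{-1}$, which is negligible on the coalescent time scale $\eps^{-3/2}$ since $\eps^{-1}/\eps^{-3/2} = \eps^{1/2} \to 0$; therefore looking back over a window of length $T\eps^{-3/2}$ from time $T\eps^{-3/2}$ is, up to vanishing error, the same as looking back this window from time $T\eps^{-3/2}+t_0$. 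Finally, because the limiting law of $W$ and hence of the descendant population only enters through its nondegenerate distribution, averaging over $W$ produces the (annealed) Bolthausen--Sznitman limit stated in the Corollary.

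The only point requiring any care beyond quoting \cite[Theorem~2]{bbs} is the bookkeeping that ties together the two time scales $t_0$ and $T\eps^{-3/2}$. Concretely, one must check that restarting \cite[Theorem~2]{bbs} from the random (but asymptotically deterministic, conditionally on $W$) initial data produced by Proposition~\ref{P:ic} is legitimate: this follows because the conclusion of \cite[Theorem~2]{bbs} depends continuously on the initial data through the ratio $Z_\eps/Y_\eps$-type quantities, whose limits are supplied by Proposition~\ref{P:ic}. Once this is in place, no further work is needed, since the finite-dimensional convergence of the partition-valued process transfers directly from the fixed-initial-configuration setting of \cite{bbs} to the present single-ancestor setting, yielding the stated time-changed Bolthausen--Sznitman limit.
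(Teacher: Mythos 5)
Your proposal is correct and takes essentially the same route the paper does: the paper gives no separate proof of the Corollary, simply calling it ``an immediate consequence'' of Proposition~\ref{P:ic} and Theorem~2 of \cite{bbs}, and your write-up spells out exactly that reduction (Markov property at $t_0 = cL^2$, Proposition~\ref{P:ic} supplies the two hypotheses of \cite[Theorem~2]{bbs}, and $t_0 = O(\eps^{-1}) = o(\eps^{-3/2})$ so the initial burn-in vanishes on the coalescent time scale). One small inaccuracy worth flagging: you assert that $\{W>0\}$ ``in the limit coincides with survival,'' but in fact $W>0$ almost surely (this follows from \eqref{LapTW} and $\psi(u)\to 0$ as $u\to\infty$), whereas the branching Brownian motion started at $L+\alpha$ survives only with probability $\theta(\alpha)\in(0,1)$; the resolution is that at time $cL^2$ extinction has typically not yet occurred, so the configuration is nondegenerate even on trajectories that will eventually die out, and extinction plays out later, on the $\eps^{-3/2}$ scale. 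This does not affect the validity of your argument since the hypotheses of \cite[Theorem~2]{bbs} are distributional, but the remark as phrased is misleading.
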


\begin{Rmk}
  Note that in the above result it is essential to restrict to $s<T$, as for $s=T$, the partition $\Pi_\eps(T)$ is the trivial partition consisting of exactly one block. (Indeed, by construction all particles are descended from the same individual at time 0).
\end{Rmk}

\subsection{Ideas behind the proofs}

The proofs of Theorems \ref{T:kpplim} and \ref{T:survivalthm} depend heavily on results in \cite{bbs}, of which this paper is a sequel.  In \cite{bbs}, we chose a different parameterization.  More precisely, for each $N \in \N$ we let $L = (\log N + 3 \log \log N)/\sqrt{2}$, which means that $\eps = \eps_N = 2\pi^2/(\log N + 3 \log \log N)^{2}$.  Consequently, obtaining asymptotic results as $\eps \rightarrow 0$ is equivalent to obtaining asymptotic results as $N \rightarrow \infty$.

The main result in \cite{bbs} can be described as follows. Let $M_N(t)$ be the number of particles alive at time $t$.
Denote the positions of the particles at time $t$ by $X_1(t) \geq X_2(t) \geq \dots X_{M_N(t)}(t)$.  Let
$$
Z_N(t) := \sum_{i=1}^{M_N(t)} e^{\mu X_i(t)} \sin \bigg(\frac{\pi X_i(t)}{L}\bigg) \mathbf{1}_{\{ X_i(t) \le L\}}, \qquad t\ge 0,
$$
and
$$
Y_N(t) := \sum_{i=1}^{M_N(t)} e^{\mu X_i(t)}  , \qquad t\ge 0.
$$
For each $N$, pick an initial configuration $X_1(0), \ldots, X_{M_N(0)}(0)$ such that $Z_N(0)/N(\log N)^2$ converges in distribution to some nondegenerate random variable $W$ as $N \rightarrow \infty$ and such that $Y_N(0) =o(N(\log N)^3).$ Then the processes $(Z_N(t),t\ge 0)$ converge in the sense of finite dimensional marginals to a limit $(Z(t), t\ge 0)$.  The limiting process $(Z(t), t\ge 0)$ is a continuous-state branching process (CSBP) with branching mechanism
$\psi(u) = au + 2 \pi^2 u \log u$, where $a \in \R$ is a constant whose value remains unknown.  This CSBP in the case $a = 0$ was introduced by Neveu in \cite{neveu}.  Therefore, for such a sequence of initial configurations, it is not surprising that the probability of extinction of the branching Brownian motion converges to the probability that $$\lim_{t \rightarrow \infty} Z(t) = 0$$ when $Z(0) = W$.  This probability is nontrivial whenever $W$ is not degenerate.

Unfortunately, we can not apply this result directly to a sequence of initial configurations which consists for each $N$ of a single particle at $L + \alpha$ because the condition on $Y_N(0)$ fails.  Instead we consider stopping the particles when they first hit a barrier at $L-y$, where $y \to \infty$ as $N\to \infty$ but $y \ll L$. We call $N_y$ the total number of particles that hit $L-y$, and we use as our initial configuration $N_y$ particles situated at $L-y.$  Because the process started from a single particle at $x$ becomes extinct if and only if the descendants of each of the $N_y$ particles at $L - y$ die off, this formulation is equivalent.  The upshot is that this new initial configuration falls in the application field of the results of \cite{bbs}, and we know explicitly the distribution of the random variable $W$.

The argument above allows us to show that for a given $\alpha$, the quantity $Q_{\mu}(L + \alpha)$ converges to a limit, say $\theta (\alpha)$, as $\eps \to 0.$  Once this is known, it is relatively straightforward, using what is known about the random variable $W$, to show that $\theta$ in fact solves (\ref{kolmeq}), which completes the proof of Theorem \ref{T:kpplim}.  To prove Theorem \ref{T:survivalthm}, we use results in \cite{bbs} to estimate the probability that a particle eventually reaches $L - \alpha$ and then apply the result of Theorem \ref{T:kpplim}.

\subsection{Related results and models}

Models of branching Brownian motion with absorption and branching random walk with absorption have received a lot of attention lately. We review here some of the pertinent known results.

Harris, Harris, and Kyprianou \cite{hhk06} showed (see Theorem 13) that the function $x \mapsto Q_{\mu}(x)$ satisfies Kolmogorov's equation $$\frac{1}{2} Q_{\mu}''(x) =  \mu Q_{\mu}'(x)  - Q_{\mu}(x)(1 - Q_{\mu}(x))$$
with boundary conditions $\lim_{x \rightarrow 0} Q_{\mu}(x) = 0$ and $\lim_{x \rightarrow \infty} Q_{\mu}(x) = 1$.  They also showed (see Theorem 1) that for each fixed $\mu < \sqrt{2}$ there is a constant $K$ such that $$\lim_{x \rightarrow \infty} e^{(\sqrt{\mu^2 + 2} - \mu)x} (1 - Q_{\mu}(x)) = K.$$  Note that the results in \cite{hhk06} are stated in terms of the extinction probability rather than the survival probability.

Simon and Derrida \cite{ds07, sd08} obtained quite precise estimates for the survival probabilities.  They considered a Brownian motion with diffusion coefficient $\sigma^2 = 2$, so their value of $L$ is twice as large as ours and their critical velocity is $2$ rather than $\sqrt{2}$.  However, translating their results into our context and notation, they obtain (see equation (B.16) of \cite{sd08}) that there is a constant $C$ such that when $L - x \gg 1$, we have
$$Q_{\mu}(x) = C L e^{\sqrt{2}(x - L)} \bigg( \sin \bigg( \frac{\pi x}{L} \bigg) + O \bigg( \frac{1}{L^2} \bigg) \bigg) + O(e^{2 \sqrt{2}(x - L)}).$$  See also equation (6) of \cite{ds07}.  They also show (see equation (B.17) of \cite{sd08})
that there is another constant $c$ such that when $x > L$ or $L - x$ is $O(1)$, we have $$Q_{\mu}(x) = 1 - \theta(x - L + c) + O \bigg( \frac{1}{L^2} \bigg),$$ where $\theta$ solves the differential equation (\ref{kolmeq}) above.  Note that these results would imply Theorems \ref{T:kpplim} and \ref{T:survivalthm}.  Although the derivations in \cite{ds07, sd08}, which are based on differential equations arguments, are not fully rigorous, we believe that it is probably possible to fill in the details in this argument and obtain the results by the methods used in \cite{ds07, sd08}.  We choose instead to pursue a probabilistic approach which shows the connections between survival probabilities and Neveu's CSBP.

Finally, while doing this project, we learned that  Aidekon, Harris, and Pemantle \cite{ahp} were working on similar questions using a spine-decomposition technique. 
We believe that their approach will yield the same results that we have.

There has also been a surge of recent work on survival probabilities for branching random walks in which particles are killed if they get to the left of a wall.  For results in this direction, see Gantert, Hu, and Shi \cite{ghs}, B\'erard and Gou\'er\'e \cite{bego09}, Jaffuel \cite{jaffuel}, and Feng and Zeitouni \cite{fz09}.

\section{Proof of Theorem \ref{T:kpplim}}

Throughout this section, let $(Z_t, t \geq 0)$ be a continuous-state branching process with branching mechanism $\psi(u) = au + 2 \pi^2 u \log u$.  We use $\P_x$ to denote probabilities when $Z_0 = x$ and $\P_{\nu}$ to denote probabilities when $Z_0$ has distribution $\nu$.  From results in \cite{grey}, we know that this process does not go extinct, meaning that if $Z_0 > 0$ then almost surely $Z_t > 0$ for all $t > 0$.  Nevertheless, we have $\lim_{t \rightarrow \infty} Z_t = 0$ with positive probability.  The proposition below relates this probability to the probability that branching Brownian motion goes extinct.

\begin{Prop}\label{P:lim_prob_ext}
Consider a branching Brownian motion with drift $\mu= \sqrt{2 - \eps_N}$. Let
$$
Z_N(t) = \sum_{i=1}^{M_N(t)} e^{\mu X_i(t)} \sin(\pi X_i(t) / L)
$$
and $$Y_N(t) = \sum_{i=1}^{M_N(t)} e^{\mu X_i(t)}.$$
Choose the initial configurations so that $Z_N(0) / N (\log N)^2 \to \nu$ in distribution as $N \to \infty$, and $Y_N(0)/(N(\log N)^3)$ converges in probability to 0.  Let $\cE$ denote the event that $\limsup_{t \to \infty} Z_t = 0$, and let $\cE_N = \{\lim_{t \to \infty} M_N(t) = 0\}$, the event of extinction of our branching Brownian motion.
Then as $N \to \infty$, $\lim_{N \to \infty} \P(\cE_N) = \P_\nu(\cE).$
\end{Prop}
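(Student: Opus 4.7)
My plan starts from the identity obtained by conditioning on the configuration at a fixed time $T$ and applying the branching property of BBM:
$$\P(\cE_N) = \E\!\left[\prod_{i=1}^{M_N(T)} \bigl(1 - Q_\mu(X_i(T))\bigr)\right].$$
I would recognise the right-hand side as (asymptotically) a Laplace functional of $Z_N(T)$, pass to the limit $N \to \infty$ at fixed $T$ via the finite-dimensional convergence $Z_N(T)/(N(\log N)^2) \Rightarrow Z_T$ from \cite{bbs}, and then let $T \to \infty$ using the long-time behaviour of Neveu's CSBP.

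The heart of the argument is an approximation of the form
\begin{equation}\label{plan:approx}
\prod_{i=1}^{M_N(T)} \bigl(1 - Q_\mu(X_i(T))\bigr) \;=\; \exp\!\Bigl(-q\,\frac{Z_N(T)}{N(\log N)^2} + o_{\P}(1)\Bigr)
\end{equation}
for a constant $q > 0$ depending only on the mechanism $\psi(u) = au + 2\pi^2 u\log u$. This reduces to a sharp asymptotic $Q_\mu(x) \sim c_0\,L e^{-\mu L}\,e^{\mu x}\sin(\pi x/L)$ as $\eps \to 0$ with $L - x \to \infty$. The profile $f(x) = e^{\mu x}\sin(\pi x/L)\indic{x \le L}$ is natural here: a direct computation using $1-\mu^2/2-\pi^2/(2L^2)=0$ shows that $f$ is a zero-eigenfunction of the linearised generator $\tfrac{1}{2}\partial_x^2 - \mu\partial_x + 1$, and an elementary calculation yields $L e^{-\mu L} = (1+o(1))/(\sqrt{2}\,N(\log N)^2)$, which is exactly the correct scale. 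I would derive the required uniform $Q_\mu$ asymptotic by a comparison argument on the Kolmogorov ODE of \cite{hhk06}, bootstrapping their crude exponential tail bound up to a matching with the eigenfunction $f$ on the scale $L$. Atypical particles with $X_i(T)$ close to or exceeding $L$ at time $T$ are controlled via first-moment estimates on $Y_N(T)$ inherited from the assumption $Y_N(0)/(N(\log N)^3) \to 0$ together with the moment bounds of \cite{bbs}.

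Granted \eqref{plan:approx}, bounded convergence combined with the \cite{bbs} convergence gives, for each fixed $T$,
$$\lim_{N \to \infty} \P(\cE_N) = \E_\nu\bigl[\exp(-qZ_T)\bigr].$$
The left-hand side is independent of $T$, so this identity persists as $T \to \infty$. By \cite{grey}, Neveu's CSBP with mechanism $\psi$ satisfies the dichotomy $Z_t \to 0$ on $\cE$ and $Z_t \to \infty$ on $\cE^c$ almost surely, so dominated convergence gives $\E_\nu[\exp(-qZ_T)] \to \P_\nu(\cE)$ as $T \to \infty$; this also identifies $q$ with the positive root of $\psi$, namely $q = e^{-a/(2\pi^2)}$. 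The principal obstacle is the approximation \eqref{plan:approx}: the asymptotic of \cite{hhk06} holds at fixed $\mu$ for $x \to \infty$, whereas here $\mu$ and the relevant range of $x$ depend jointly on $\eps$, so a genuinely uniform estimate must be derived by a self-contained analysis of the Kolmogorov ODE. This is the main technical work of the proof, and, importantly, it is weaker than (and logically precedes) Theorem \ref{T:survivalthm}, so no circularity arises.
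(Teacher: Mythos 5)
Your plan is a genuinely different route from the paper's, and it is conceptually cleaner where it works, but it has a serious gap that you have noticed and then understated: the whole argument hinges on a uniform asymptotic $Q_\mu(x) = (1+o(1))\,c_0\, L e^{-\mu L} e^{\mu x}\sin(\pi x/L)$ (uniformly over $x$ with $L-x\to\infty$ as $\eps\to 0$), and you have not proved it. You describe it as ``weaker than (and logically precedes) Theorem~\ref{T:survivalthm}''; but since $e^{-\mu L}e^{\mu x}=e^{-\mu(L-x)}$, this estimate \emph{is} Theorem~\ref{T:survivalthm} verbatim (the constant there is likewise unidentified). In the paper's logical order, Theorem~\ref{T:survivalthm} is proved \emph{from} Proposition~\ref{P:lim_prob_ext} via Theorem~\ref{T:kpplim}. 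So for your plan to avoid circularity you must independently establish the sharp $Q_\mu$ asymptotic by a self-contained ODE/comparison analysis. That is exactly the Derrida--Simon route, which the paper explicitly declines to pursue precisely because making it rigorous is nontrivial: the Harris--Harris--Kyprianou tail asymptotic is at fixed $\mu$ and $x\to\infty$, with a $\mu$-dependent constant, so ``bootstrapping'' it to a uniform-in-$\eps$ statement across the relevant window $x\asymp L$ is where all the work hides, and you do not give it. Until that estimate is nailed down, the crucial approximation~\eqref{plan:approx} is unsupported.

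The paper sidesteps this entirely. Instead of any quantitative control on $Q_\mu$, it proves two purely \emph{qualitative} a priori estimates, which together with the finite-dimensional convergence of $Z_N(t(\log N)^3)/(N(\log N)^2)$ to Neveu's CSBP (Proposition~1 of \cite{bbs}) and the CSBP dichotomy $Z_t\to 0$ or $\infty$ (Lemmas~\ref{L:extprobinf}--\ref{L:extprobinf_approx}) suffice: (i) Lemma~\ref{L:Zlarge}: if $Z_N$ is large at some time, extinction is unlikely, uniformly in $N$ --- proved by a second-moment argument showing that particles then reach $L$ and seed a supercritical Galton--Watson process (Lemmas~\ref{L:Prop1618}--\ref{L:compGW}); (ii) Lemma~\ref{L:ext_if_small}: if $Z_N$ is small, extinction is likely --- proved by an optional-stopping argument with the martingale $\bar Z$. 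These are a priori bounds, not asymptotics, so no information about $Q_\mu$ is needed. If you want to salvage your approach, you could weaken~\eqref{plan:approx} to a two-sided sandwich of the form
\begin{equation*}
\exp\!\Bigl(-q_{+}\,\tfrac{Z_N(T)}{N(\log N)^2}\Bigr)(1+o(1)) \;\le\; \prod_i \bigl(1-Q_\mu(X_i(T))\bigr) \;\le\; \exp\!\Bigl(-q_{-}\,\tfrac{Z_N(T)}{N(\log N)^2}\Bigr)(1+o(1))
\end{equation*}
with constants $0<q_-<q_+$ not required to match, and then let $T\to\infty$: the CSBP dichotomy makes $\E_\nu[e^{-q_\pm Z_T}]\to\P_\nu(\cE)$ for \emph{any} positive exponent, so a matching constant is not needed. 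But even a two-sided bound on $Q_\mu$ at the scale $L e^{-\mu(L-x)}\sin(\pi x/L)$, uniform in $\eps$, is a substantial piece of work that you have not carried out; the paper's Lemmas~\ref{L:Zlarge} and~\ref{L:ext_if_small} are its substitute for it, and that is the content of the proof that your proposal is missing.

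Two smaller issues. First, you need to handle particles with $X_i(T)$ near or above $L$, where $Q_\mu$ is bounded away from $0$ and $\log(1-Q_\mu)\not\approx -Q_\mu$; you mention using $Y_N$-moment bounds, which is the right idea (the paper uses Lemma 20 of \cite{bbs} for this in Lemma~\ref{L:ext_if_small}), but it needs to be checked that the contribution is $o_\P(1)$ in the exponent, not just that $Y_N$ is small. Second, your appeal to \cite{grey} for the dichotomy $Z_t\to 0$ or $\infty$ is too quick: Grey gives non-extinction, but the paper actually proves the dichotomy itself (Lemma~\ref{L:notstuck}) and rules out oscillation between small and large values; you would need to do the same or locate a precise reference.
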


The key idea of the proof is to use Proposition 1 in \cite{bbs}. However, some care is needed because the convergence in this proposition holds only in the sense of finite-dimensional distributions, and this can not be
extended to weak convergence in the Skorokhod topology. The argument will therefore roughly
consist of showing that if one observes $Z_t$ for some large but finite $t$, then one can guess whether the event $\cE$ holds with very small error probability. Since Proposition 1 in \cite{bbs} shows convergence of the finite dimensional distributions, it remains to show some \emph{a priori} estimates for survival or extinction when $Z_N(t)$ is respectively large or small. This is achieved via martingale arguments and a comparison with an ordinary (Galton-Watson) branching process.

\begin{Lemma}\label{L:extprobinf}
For all $x \in \R$, we have $\P_x(\cE) = e^{- x \alpha}$, where $\alpha = \exp(- a/2\pi^2)$. 
\end{Lemma}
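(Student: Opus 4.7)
The plan is to compute $\P_x(\cE)$ by combining the branching property of the CSBP with an explicit Laplace transform computation for Neveu's mechanism.

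First I would use the branching property to show that $f(x) := \P_x(\cE)$ is multiplicative: since a CSBP starting from $x_1 + x_2$ decomposes in law as a sum of two independent CSBPs starting from $x_1$ and $x_2$, and since the summands are nonnegative, $\limsup_{t\to\infty} Z_t = 0$ holds for the sum if and only if it holds for each summand, yielding $f(x+y) = f(x) f(y)$. Monotonicity of $f$ combined with multiplicativity then forces $f(x) = e^{-\alpha' x}$ for some $\alpha' \in [0, \infty]$, and the task reduces to identifying $\alpha'$ as $\alpha = \exp(-a/(2\pi^2))$.

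Next I would compute the Laplace transform of $Z_t$ explicitly. Writing $\E_x[e^{-\lambda Z_t}] = \exp(-x\, u_t(\lambda))$ with $\partial_t u_t = -\psi(u_t)$ and $u_0 = \lambda$, the substitution $v = \log u + a/(2\pi^2)$ linearizes the ODE to $\partial_t v_t = -2\pi^2 v_t$, giving
$$u_t(\lambda) = \alpha \bigl(\lambda/\alpha\bigr)^{e^{-2\pi^2 t}}.$$
In particular $u_t(\lambda) \to \alpha$ as $t \to \infty$ for every $\lambda > 0$, so $\E_x[e^{-\lambda Z_t}] \to e^{-x\alpha}$.

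Finally, since $\psi(\alpha) = 0$ one has $u_t(\alpha) \equiv \alpha$, so $M_t := e^{-\alpha Z_t}$ is a bounded martingale. It converges almost surely, hence $Z_t$ converges a.s.\ in $[0, \infty]$ to some $Z_\infty$. By bounded convergence, $\E_x[e^{-\lambda Z_\infty}] = e^{-x\alpha}$ for every $\lambda > 0$; since this expression is constant in $\lambda$, the limit $Z_\infty$ must be supported on $\{0, \infty\}$ with $\P_x(Z_\infty = 0) = e^{-x\alpha}$. As $\cE = \{\limsup_{t\to\infty} Z_t = 0\} = \{Z_\infty = 0\}$ by nonnegativity, this gives $\P_x(\cE) = e^{-x\alpha}$. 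The step I expect to require the most care is this last passage from convergence of Laplace transforms to an almost sure statement about $\cE$; it is handled by the martingale $M_t$, whose existence hinges on $\alpha$ being the unique positive root of $\psi$.
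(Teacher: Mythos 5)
Your proof is correct, but it takes a genuinely different and more self-contained route than the paper. The paper's proof is very short: it establishes $f(x+y)=f(x)f(y)$ by the branching property, then outsources everything else to the discussion on p.~716 of Bertoin, Fontbona and Martinez~\cite{befose}, from which it reads off both the nontriviality ($f \not\equiv 1$, via $\psi'(0) = -\infty$) and the identification $\alpha = \exp(-a/2\pi^2)$ as the largest positive root of $\psi(u)=0$. You instead carry out the argument from scratch: the substitution $v = \log u + a/(2\pi^2)$ linearizing $\partial_t u = -\psi(u)$ to give $u_t(\lambda) = \alpha(\lambda/\alpha)^{e^{-2\pi^2 t}}$, the observation that $\psi(\alpha)=0$ makes $e^{-\alpha Z_t}$ a bounded martingale, and the standard passage (taking $\lambda \downarrow 0$ and $\lambda \uparrow \infty$ in $\E_x[e^{-\lambda Z_\infty}] = e^{-x\alpha}$) to conclude $Z_\infty$ is $\{0,\infty\}$-valued with $\P_x(Z_\infty = 0) = e^{-x\alpha}$. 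Your computations check out, and this version has the virtue of being fully explicit rather than citing a prolific-individuals result. One small inefficiency: your opening paragraph on multiplicativity and monotonicity, which forces $f(x) = e^{-\alpha' x}$ with an unidentified $\alpha'$, is actually superfluous in your argument, since the martingale argument in the final paragraph directly produces $\P_x(\cE) = e^{-x\alpha}$ without needing the a priori exponential form. It would suffice to state at the outset that the martingale argument identifies $\P_x(\cE)$ outright.
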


\begin{proof}
  By the branching property, $f(x) = \P_x(\cE)$ satisfies $f(x+y)  = f(x) f(y)$. To show that $f$ is indeed of the above type, it suffices to show that $f(x)$ is not identically 1.
  This in turn follows from the fact that
  $$
  \psi'(0) = - \infty < 0
  $$
  and the discussion on p. 716 of Bertoin et al. \cite{befose}. The identification $\alpha = \exp(- a/2\pi^2)$ also comes from that same result (it is the largest root of $\psi(u) = 0$).
\end{proof}

Note that in particular, for any $0 < \delta < 1$ we can find $\ep$ and $A> \ep$ such that $\P_\ep(\cE) > 1- \delta$ while $\P_A(\cE) \le \delta$.

\begin{Lemma}\label{L:notstuck}
Almost surely $\lim_{t \to \infty} Z_t$ exists and is either 0 or $\infty$.  In particular, if $0<\ep<A$, then $\P_\nu(Z_t \in [\ep, A]) \to 0$ as $t \to \infty$.
\end{Lemma}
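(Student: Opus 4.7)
The plan is to exhibit a bounded martingale which forces $Z_t$ to converge, and then to use the CSBP Laplace transform to show that the limit is supported on $\{0,\infty\}$. Let $\alpha = \exp(-a/2\pi^2)$ be the positive root of $\psi$ produced in Lemma \ref{L:extprobinf}, and set $M_t := e^{-\alpha Z_t}$. Recall that the CSBP satisfies $\E_x[e^{-\lambda Z_s}] = e^{-x u_s(\lambda)}$, where $u_s$ solves $\partial_s u_s = -\psi(u_s)$ with $u_0(\lambda) = \lambda$. Since $\psi(\alpha) = 0$, the constant function $\alpha$ is a fixed point of this ODE, so $u_s(\alpha) \equiv \alpha$ and $\E_x[e^{-\alpha Z_s}] = e^{-\alpha x}$. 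Combined with the Markov property this shows that $M_t$ is a martingale; being bounded in $[0,1]$ it converges $\P_\nu$-a.s., and hence $Z_t \to Z_\infty \in [0,\infty]$ $\P_\nu$-almost surely.

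Next I would identify $Z_\infty$ as $\{0,\infty\}$-valued. Computing $\psi'(u) = a + 2\pi^2(\log u + 1)$ gives $\psi'(0^+) = -\infty$, and this together with $\psi(0) = \psi(\alpha) = 0$ shows that $\psi < 0$ on $(0,\alpha)$ and $\psi > 0$ on $(\alpha,\infty)$. For any fixed $\lambda > 0$, the ODE $\partial_s u_s(\lambda) = -\psi(u_s(\lambda))$ then makes $s \mapsto u_s(\lambda)$ monotone (increasing if $\lambda < \alpha$, decreasing if $\lambda > \alpha$) and forces $u_s(\lambda) \to \alpha$ as $s \to \infty$. Therefore $\E_x[e^{-\lambda Z_t}] = e^{-x u_t(\lambda)} \to e^{-\alpha x}$ for every $\lambda > 0$, and bounded convergence (applied to the a.s.\ convergence of $Z_t$ to $Z_\infty$) gives $\E_x[e^{-\lambda Z_\infty}] = e^{-\alpha x}$, independent of $\lambda > 0$. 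A Laplace transform that is constant in $\lambda$ on $(0,\infty)$ can only be that of a $\{0,\infty\}$-valued random variable, with mass $e^{-\alpha x}$ at $0$ and mass $1 - e^{-\alpha x}$ at $\infty$ (consistent with Lemma \ref{L:extprobinf}). Integrating against $\nu$ yields the same dichotomy under $\P_\nu$.

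The ``in particular'' conclusion is then immediate: once $Z_t \to Z_\infty \in \{0,\infty\}$ a.s., $\indic{Z_t \in [\ep,A]} \to 0$ a.s.\ for any fixed $0 < \ep < A$, so dominated convergence gives $\P_\nu(Z_t \in [\ep,A]) \to 0$. I do not expect a serious obstacle here; the only delicate point is that $\psi$ has infinite slope at $0$, so the ODE behaves singularly at the origin, but for each fixed initial condition $\lambda > 0$ the trajectory is bounded away from $0$ and monotonicity alone yields $u_s(\lambda) \to \alpha$. If preferred, one could instead cite general CSBP dichotomy results such as those in \cite{befose} or Grey \cite{grey}, but the martingale-plus-Laplace-transform argument sketched above seems shortest given what is already at hand.
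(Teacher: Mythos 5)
Your proof is correct, and it follows a genuinely different path from the paper's. The paper's argument is more combinatorial: it fixes a dyadic window $(2^m,2^{m+1})$, uses $\P_x(\cE)>0$ (Lemma~\ref{L:extprobinf}) to get a uniform lower bound on the probability of dropping to $2^{m-1}$ in bounded time, and then runs a strong-Markov/Borel--Cantelli argument to rule out $\liminf Z_t$ (and separately $\limsup Z_t$) landing in any such window; a final appeal to Lemma~\ref{L:extprobinf} excludes $\liminf=0$, $\limsup=\infty$ occurring simultaneously. You instead observe that $M_t=e^{-\alpha Z_t}$ is a bounded martingale (because $\psi(\alpha)=0$ makes $u_s(\alpha)\equiv\alpha$), which yields $\P_\nu$-a.s.\ existence of $Z_\infty\in[0,\infty]$ in one stroke, and then you pin down the law of $Z_\infty$ by showing $u_s(\lambda)\to\alpha$ for every $\lambda>0$ from the sign pattern of $\psi$ on $(0,\alpha)$ and $(\alpha,\infty)$ (using $\psi'(0^+)=-\infty$), so that the Laplace transform of $Z_\infty$ is constant in $\lambda$ and $Z_\infty$ is $\{0,\infty\}$-valued. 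Your route avoids the paper's separate treatment of $\liminf$ and $\limsup$ and the slightly delicate final step about their simultaneous extremes, at the cost of leaning more heavily on the explicit form of the branching mechanism and the Laplace-transform flow; the paper's hitting-time argument is more robust to not knowing $\psi$ explicitly (it only needs $\P_x(\cE)>0$). Your observation about the singularity of the ODE at $0$ being harmless because trajectories with $\lambda>0$ stay bounded away from $0$ by monotonicity is exactly the right point to address.
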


\begin{proof}
Observe first that by the previous lemma, for any $y>0$, if $\tau_y = \inf\{t \ge 0: Z_t \le y\}$, and if $0<a<b$ are arbitrary, then $\inf_{x \in (a,b)} \P_x(\tau_{a/2}< \infty) >0$.  Therefore, for each integer $m$, there is a constant $K_m$ such that
\begin{equation}\label{unifbound}
\inf_{x \in (2^m, 2^{m+1})} \P_x(\tau_{2^{m-1}}< K_m) >0.
\end{equation}
For each integer $m$, let $\tau_m^0 = 0$, and then for $k \geq 0$, inductively define $\tau_m^{k+1} = \inf\{t \geq \tau_m^k + K_m: Z_t \in (2^m, 2^{m+1})\}$.  Consider the event $\cA_m = \{\liminf_{t \to \infty} Z_t  \in (2^{m}, 2^{m+1})\}$ for $m \in \Z$. Then on the event $\cA_m$, we have $\tau^k_m < \infty$ for all $k \ge 0$. By
(\ref{unifbound}), between each $\tau_m^k$ and $\tau_m^{k+1}$, there is a probability uniformly bounded below that the CSBP falls below $2^{m-1}$, and these trials are independent by the strong Markov property. Thus, almost surely on $\cA_m$, $\liminf_{t \to \infty} Z_t \le 2^{m-1}$, which means $\P_x(\cA_m) = 0$. Therefore, $\P(\cup_{m \in \Zsm} \cA_m) = 0$, so almost surely, $\liminf_{t \to \infty} Z_t \in \{0, \infty\}$.  By a similar argument, almost surely, $\limsup_{t \to \infty} Z_t \in \{0, \infty\}$. By Lemma \ref{L:extprobinf}, one also discards the possibility that $\liminf_{t \to \infty} Z_t = 0$ and $\limsup_{t \to \infty} Z_t = \infty$ occur simultaneously. This proves the lemma.
\end{proof}

\begin{Lemma}\label{L:extprobinf_approx}  Choose $\delta>0$ and $\ep > 0$ so that $\P_\ep( \cE) \ge 1- \delta$. Then there exists $t_0$ such that if $t\ge t_0$, $|\P_\nu( Z_t < \ep) - \P_\nu(\cE)|\le 3 \delta$.
\end{Lemma}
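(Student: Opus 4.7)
The plan is to split the two probabilities against one another: writing
$$|\P_\nu(Z_t < \ep) - \P_\nu(\cE)| \le \P_\nu(Z_t < \ep, \cE^c) + \P_\nu(Z_t \ge \ep, \cE),$$
I will show that each term on the right is at most a small multiple of $\delta$ once $t$ is large. The first term will be bounded uniformly in $t$ by the Markov property and the explicit form of $\P_x(\cE)$ from Lemma \ref{L:extprobinf}, while the second term will be shown to tend to $0$ using the dichotomy from Lemma \ref{L:notstuck}.

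For the first term, note that $\cE$ is a tail event for the CSBP, so the Markov property at time $t$ gives
$$\P_\nu(\cE \mid \cF_t) = \P_{Z_t}(\cE) = e^{-\alpha Z_t},$$
where the second equality uses Lemma \ref{L:extprobinf}. In particular $x \mapsto \P_x(\cE)$ is nonincreasing, so on the event $\{Z_t < \ep\}$ we have $\P_\nu(\cE \mid \cF_t) \ge \P_\ep(\cE) \ge 1-\delta$. Integrating,
$$\P_\nu(Z_t < \ep, \cE^c) = \E_\nu\bigl[\indic{Z_t < \ep}\,(1-\P_{Z_t}(\cE))\bigr] \le \delta,$$
with no condition on $t$.

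For the second term, I invoke Lemma \ref{L:notstuck}: combined with the definition of $\cE$ as $\{\limsup_{t \to \infty} Z_t = 0\}$, it implies that on $\cE$ we have $Z_t \to 0$ almost surely. Thus $\indic{Z_t \ge \ep}\indic{\cE} \to 0$ almost surely, and by bounded convergence $\P_\nu(Z_t \ge \ep, \cE) \to 0$ as $t \to \infty$; choose $t_0$ so that this probability is at most $\delta$ for $t \ge t_0$. Adding the two bounds gives $|\P_\nu(Z_t < \ep) - \P_\nu(\cE)| \le 2\delta \le 3\delta$ for $t \ge t_0$, as required. There is no serious obstacle in this argument; the only subtlety is to be careful when applying the Markov property to the tail event $\cE$, and to observe that Lemma \ref{L:notstuck} is precisely what rules out the scenario where $Z_t$ can spend arbitrarily large time above $\ep$ on the event $\cE$.
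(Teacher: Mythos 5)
Your proof is correct and takes a genuinely cleaner route than the paper. The paper decomposes $\{Z_t \ge \ep\}$ further into $\{Z_t \in (\ep,A)\}$ and $\{Z_t > A\}$ for an auxiliary large threshold $A$: the middle piece is controlled via Lemma~\ref{L:notstuck} (which shows $\P_\nu(Z_t \in [\ep,A]) \to 0$ because the process cannot stay stuck in a compact set), and the $\{Z_t > A\}$ piece is controlled by the same Markov-property observation you use, namely $\P_\nu(\cE;\,Z_t>A) \le \P_A(\cE) \le \delta$. You instead handle $\P_\nu(Z_t \ge \ep,\,\cE)$ in one stroke by noting that $\cE$ itself forces $Z_t \to 0$ and applying bounded convergence. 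This avoids introducing $A$ entirely and tightens the bound to $2\delta$. One small remark: your invocation of Lemma~\ref{L:notstuck} is not actually needed --- since $Z_t \ge 0$, the defining property $\limsup_{t\to\infty} Z_t = 0$ on $\cE$ already gives $Z_t \to 0$ with no further input. Your Markov-property step matches the paper's treatment of the complementary term $\P_\nu(\cE^\complement;\,Z_t < \ep) \le \P_\ep(\cE^\complement) \le \delta$ exactly, via Lemma~\ref{L:extprobinf} and the monotonicity of $x \mapsto \P_x(\cE)$.
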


\begin{proof}
  We write for all $A> \ep$
  $$
  \P_\nu(\cE) = \P_\nu(\cE; Z_t \le \ep) + \P_\nu(\cE; Z_t \in (\ep, A)) + \P_\nu(\cE; Z_t > A).
  $$
  Choosing $A>0$ large enough that $\P_A(\cE) \le \delta$,
  \begin{align*}
  |\P_\nu(\cE) - \P_\nu(Z_t \le \ep)|  &\le  \P_\nu(\cE^\complement; Z_t \le \ep)+ \P_\nu(\cE; Z_t \in (\ep, A)) + \P_\nu(\cE; Z_t > A)\\
  & \le \P_\ep(\cE^\complement) + \P_\nu(Z_t \in [\ep, A]) + \P_A(\cE)\\
  & \le 3 \delta
  \end{align*}
  for $t \ge t_0$, since the middle term becomes smaller than $\delta$ for $t$ large enough by Lemma \ref{L:notstuck}.
\end{proof}

We now come to the main technical part of the proof, which says that for the branching Brownian motion, if $Z_N(t)$ happens to be large at some point, then it becomes unlikely that the process will ever become extinct. This is achieved through a rough comparison with a supercritical branching process. First we need to slightly reformulate a result of \cite{bbs}.  Here we are working with branching Brownian motion with particles killed both at $0$ and at $L$, and $({\cal F}_t, t \geq 0)$ denotes the natural filtration of this process.
\begin{Lemma}
  \label{L:Prop1618}
Assume that $Y_N(0)/(N(\log N)^3)$ converges in probability to 0 as $N \rightarrow \infty$.  Let $R$ be the number of particles that hit $L$ between times $0$ and $(\log N)^3$.
Then for sufficiently large $N$, we have
  \begin{equation}\label{Prop1618Exp}
  \E(R| \cF_0) \ge \pi \frac{Z_N(0)}{N ( \log N)^2}.
  \end{equation}
Also, there is a constant $C$ such that
  \begin{equation}\label{Prop1618Var2}
  \E(R^2 | \cF_0) \le C \left(\frac{Z_N(0)}{N ( \log N)^2}\right)^2 + C \frac{Z_N(0)}{N(\log N)^2} + o_p(1),
  \end{equation}
where $o_p(1)$ denotes a term which tends to zero in probability as $N \rightarrow \infty$.  If there is a deterministic sequence $a_N \rightarrow 0$ such that $Y_N(0)/(N(\log N)^3) \leq a_N$ for all $N$, then the $o_p(1)$ can be replaced by an $o(1)$ term, which tends to zero uniformly as $N \rightarrow \infty$.
\end{Lemma}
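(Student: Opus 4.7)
The plan is to deduce Lemma \ref{L:Prop1618} from Propositions 16 and 18 of \cite{bbs}, which (as the labeling of the lemma suggests) should provide precisely these inequalities under the stronger hypothesis that $Y_N(0)/(N(\log N)^3)$ is bounded above by a deterministic null sequence. The only work left is to upgrade the uniform $o(1)$ error in that setting to an $o_p(1)$ error here, by a standard truncation onto a high-probability event.

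First I would use the convergence-in-probability hypothesis to choose a deterministic null sequence $a_N \to 0$ slowly enough that the event
$$B_N := \{Y_N(0)/(N(\log N)^3) \le a_N\}$$
satisfies $\P(B_N) \to 1$. The existence of such a sequence is standard: for each integer $k$, one can find $N_k$ such that $\P(Y_N(0)/(N(\log N)^3) > 1/k) < 1/k$ for all $N \ge N_k$, and then set $a_N := 1/k$ for $N_k \le N < N_{k+1}$.

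Next, since $Z_N(0)$ and $Y_N(0)$ are both $\cF_0$-measurable, on $B_N$ the deterministic hypothesis required by the propositions in \cite{bbs} is satisfied pointwise. Applying those propositions to the conditional law given $\cF_0$ yields a constant $C$ and a deterministic sequence $\epsilon_N \to 0$ such that, on $B_N$,
$$\E(R \mid \cF_0) \ge \pi \frac{Z_N(0)}{N(\log N)^2}$$
and
$$\E(R^2 \mid \cF_0) \le C\bigg(\frac{Z_N(0)}{N(\log N)^2}\bigg)^2 + C\frac{Z_N(0)}{N(\log N)^2} + \epsilon_N.$$
The first inequality is exactly (\ref{Prop1618Exp}), holding on an event of probability tending to one, consistent with the ``for sufficiently large $N$'' phrasing. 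For the second, set
$$\eta_N := \bigg(\E(R^2 \mid \cF_0) - C\bigg(\frac{Z_N(0)}{N(\log N)^2}\bigg)^2 - C\frac{Z_N(0)}{N(\log N)^2}\bigg)_+,$$
so that the desired upper bound holds trivially with this $\eta_N$. On $B_N$ we have $\eta_N \le \epsilon_N$, and hence for each fixed $\delta > 0$,
$$\P(\eta_N > \delta) \le \P(B_N^\complement) + \indic{\epsilon_N > \delta} \longrightarrow 0,$$
which shows $\eta_N = o_p(1)$ and gives (\ref{Prop1618Var2}).

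The main (and essentially only) obstacle is verifying that the propositions in \cite{bbs} really deliver the uniform deterministic $o(1)$ rate in the second-moment bound claimed above. Since the authors describe this lemma as a ``slight reformulation'' of those results, this check should consist in inspecting those proofs to confirm that the error term depends on the initial condition only through the bound $a_N$ on $Y_N(0)/(N(\log N)^3)$ — a very mild uniformity requirement that should follow automatically from the structure of the original argument.
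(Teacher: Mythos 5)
There is a genuine gap that the paper's own proof explicitly flags. Your plan is to apply Propositions 16 and 18 of \cite{bbs} verbatim, conditionally on $\cF_0$, on the high-probability event $B_N = \{Y_N(0)/(N(\log N)^3) \le a_N\}$. For the first-moment bound this works (indeed, the paper just invokes Proposition 16 with $A=0$, $s=1$, $\theta=1$). But Proposition 18 of \cite{bbs} carries an additional hypothesis that you never verify: it requires $Z_N(0)/(N(\log N)^2)$ to be bounded above by a constant $\ep^{-1/2}$ with $\theta \ep^{-1/2} \le 1$. Nothing in the hypotheses of Lemma \ref{L:Prop1618} controls $Z_N(0)$, and your truncation event $B_N$ constrains only $Y_N(0)$, not $Z_N(0)$. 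So the ``apply the proposition pointwise on $B_N$'' step fails, and this is precisely the obstacle the authors note: ``we can not apply the result directly because the result requires $Z_N(0)/(N(\log N)^2)$ to be bounded.''

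The paper's workaround is to bypass Proposition 18 as a black box and instead combine the underlying estimates (72), (73), and (78) of \cite{bbs} directly, observing that with $A=0$, $s=1$, $\theta=1$ those give (\ref{Prop1618Var2}) without requiring the $Z_N(0)$ bound, and that the only error term producing the $o_p(1)$ is the $Y_N$-dependent term in (78) — which converges to zero in probability under the stated hypothesis and uniformly to zero under the deterministic bound $a_N$. Your truncation device for turning a uniform $o(1)$ into $o_p(1)$ is sound in the abstract, but it is also unnecessary once the error term is identified as a single $Y_N(0)$-dependent quantity; its $o_p(1)$ behavior is then immediate. To repair your argument you would need to drop the wholesale application of Proposition 18 and instead trace through its proof as the authors do, checking that only the $Y_N(0)$ hypothesis is actually used when $\theta=1$ and that the conclusion holds without a bound on $Z_N(0)$.
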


\begin{proof}
To prove \eqref{Prop1618Exp}, we use Proposition 16 in \cite{bbs}, taking $A = 0$, $s = 1$, and $\theta = 1$.  Note that this lower bound can be seen either from the first conclusion of Proposition 16 in \cite{bbs} or just from equation (71) of \cite{bbs}.

To get \eqref{Prop1618Var2}, we use Proposition 18 in \cite{bbs}.  Here, we can not apply the result directly because the result requires $Z_N(0)/(N(\log N)^2)$ to be bounded by some constant $\ep^{-1/2}$ such that $\theta \ep^{-1/2} \leq 1$.  However, we just observe that Proposition 18 of \cite{bbs} comes from combining equations (72), (73), and (78) of \cite{bbs}, which lead to the bound (\ref{Prop1618Var2}) when $A = 0$, $s = 1$, and $\theta = 1$.  Note that the $o_p(1)$ term in (\ref{Prop1618Var2}) comes from the term in (78) of \cite{bbs} that involves $Y_N$.  This term becomes $o(1)$ when $Y_N(0)/(N(\log N)^3) \leq a_N$ for all $N$.
\end{proof}

We now introduce the Galton-Watson process to which we compare the branching Brownian motion.
Consider branching Brownian motion with drift $-\sqrt{2}$ started with one particle at $L$, in which particles are killed when they reach $L - y$.  Let $T_y$ be the extinction time for this process.  Because $T_y < \infty$ almost surely, there exists an increasing function $g: (0, \infty) \rightarrow (0, \infty)$ such that $\P(T_y > g(y)) \rightarrow 0$ as $y \rightarrow \infty$.
Now let $y = y_N$ be a sequence that tends to infinity slowly enough that $y=o(L)$ and $g(2y)(\sqrt{2} - \mu) \rightarrow 0$ as $N \rightarrow \infty$.

\begin{Lemma}\label{L:approxNy}
Consider branching Brownian motion with drift $-\mu$ started with a single particle at $L$.  Choose $y = y_N$ as above, and let $N_y$ be the number of particles that are killed if particles are killed upon reaching $L - y$.
As $N \to \infty$ we have
\begin{equation}\label{E:conv to W}
N_{y}  e^{-\sqrt 2 y} y \to_d W,
\end{equation}
where $\rightarrow_d$ denotes convergence in distribution.  Furthermore, there exists a universal constant $D$ such that
  \begin{equation}
  \liminf_{N \to \infty} \E( y e^{-\sqrt 2 y} N_y \wedge D ) > \frac{2}{ \pi^2 \sqrt{2}}.
  \end{equation}
\end{Lemma}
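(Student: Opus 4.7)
The plan is to deduce the distributional convergence in \eqref{E:conv to W} from the corresponding classical statement for truly critical BBM (drift $-\sqrt{2}$), via a coupling between the near-critical and critical processes, and to obtain the truncated-moment lower bound from this weak convergence by a monotone-convergence argument applied to $W$. First, I would invoke the classical result that for the critical BBM started at $L$ with particles killed at $L-y$, the number $\tilde N_y$ of absorbed particles satisfies $y e^{-\sqrt 2 y}\, \tilde N_y \to_d W$ as $y \to \infty$, with $W$ the variable whose Laplace transform appears in \eqref{LapTW}. This reflects the asymptotic $1 - \E_x[\exp(-sN_y)] \sim C(s)(L-x+y)\exp(-\sqrt 2(L-x+y))$ of the critical FKPP traveling wave solved by $u_s(x) = \E_x[\exp(-sN_y)]$, and can be read off from work of Neveu and Chauvin--Rouault.

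To pass from the critical to the near-critical process I would couple the two BBMs (drifts $-\mu$ and $-\sqrt 2$ respectively) using the same genealogy and the same driving Brownian motions, so that near-critical and critical positions are related by $\tilde W_i(t) = W_i(t) - (\sqrt 2 - \mu) t$. The near-critical killing condition $W_i = L - y$ then corresponds, in the critical picture, to the moving barrier $\tilde W_i = L - y - (\sqrt 2 - \mu) t$. By the choice of $y = y_N$, the critical process killed at the wider fixed barrier $L - 2y$ is extinct by time $g(2y)$ with probability tending to $1$, and on $[0, g(2y)]$ the gap between this moving barrier and a fixed barrier at $L - y$ is at most $g(2y)(\sqrt 2 - \mu) = o(1)$. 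Sandwiching $N_y$ between critical absorption counts at the slightly shifted fixed barriers $L - y \pm o(1)$, and using continuity of the critical limit in the barrier position (shifting $y$ by $o(1)$ leaves the normalisation $y e^{-\sqrt 2 y}$ asymptotically unchanged), one obtains $y e^{-\sqrt 2 y} N_y \to_d W$.

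The main difficulty I expect in this scheme is making the sandwich rigorous: because killing is a discontinuous event, a near-critical particle absorbed at $L-y$ corresponds in the coupled critical picture to a particle absorbed slightly earlier, after which the two trees diverge due to branching occurring in the intervening $o(1)$ time-window. I would handle this by a standard many-to-one estimate on BBM to control the expected amount of extra branching between paired absorption events, combined with the fact that $g(2y)(\sqrt 2 - \mu) \to 0$ to make the resulting error in the normalised count negligible.

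For the second statement, once $y e^{-\sqrt 2 y} N_y \to_d W$ is established, the map $x \mapsto x \wedge D$ is bounded and continuous, so
\[
\lim_{N \to \infty} \E\bigl[(y e^{-\sqrt 2 y} N_y) \wedge D\bigr] \;=\; \E[W \wedge D],
\]
and $\E[W \wedge D] \uparrow \E[W]$ as $D \to \infty$ by monotone convergence. From the explicit Laplace transform \eqref{LapTW} one verifies that $\E[W] > 2/(\pi^2 \sqrt 2)$, so choosing $D$ sufficiently large gives $\E[W \wedge D] > 2/(\pi^2 \sqrt 2)$, which yields the required $\liminf$ bound.
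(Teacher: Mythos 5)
Your overall plan matches the paper's: reduce to the critical ($-\sqrt 2$-drift) BBM by reinterpreting the fixed barrier $L-y$ as the oblique line $L-y-(\sqrt 2-\mu)t$, sandwich $N_y$ between absorption counts at fixed barriers $L-y$ and $L-y'$ with $y'-y\to 0$, invoke Neveu's convergence for the critical counts, and deduce the truncated-moment bound from bounded convergence plus $\E[W]=\infty$. Two remarks. First, your concern about ``extra branching in the intervening $o(1)$ time-window'' and a many-to-one estimate to control it is not needed and is, in fact, a red herring: the sandwich $Z_y \le N_y \le Z_{y'}$ (on the event that the critical process killed at $L-y'$ is extinct by time $g(y')$) is \emph{exact}, since each particle absorbed at a barrier that is further to the right is, in the natural genealogical coupling, an ancestor of at least one particle absorbed at the barrier further to the left; nothing needs to be quantified. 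Second, and this is a genuine gap in the argument as written, invoking only the distributional convergence $y e^{-\sqrt 2 y}\tilde N_y \to_d W$ for the critical process is not sufficient to close the sandwich: knowing that the upper and lower bounds each converge in distribution to $W$ does not imply the sandwiched quantity does, absent information about their joint law. The paper uses Neveu's result in its almost-sure form, $y e^{-\sqrt 2 y} Z_y \to W$ a.s.\ as $y\to\infty$ along one realisation of the branching process, so that $Z_y$ and $Z_{y'}$ converge to the \emph{same} random variable and the sandwich immediately yields convergence in probability (hence in distribution) of the normalised $N_y$; your ``continuity of the critical limit in the barrier position'' is a heuristic for this but needs the a.s.\ (or at least joint) statement to be made precise. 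With that repair, the rest of your argument, including the truncated-moment step via the portmanteau theorem, monotone convergence, and $\E W=\infty$ (Proposition 27 of \cite{bbs}), coincides with the paper's.
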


\begin{proof}
The variable $N_y$ is a functional of a branching Brownian motion with drift $-\mu$ started from one particle at $L$ but it is easy to see that one can instead work with a branching Brownian motion with drift $-\sqrt 2$.
For this process, let $Z_y$ be the number of particles that hit position $L-y$ if initially there is one particle at L and particles are killed upon reaching $L-y$. To offset the change of drift it suffices to replace the operation of killing at position $L-y$ by killing particles upon hitting an oblique line whose slope is exactly $\sqrt 2 -\mu$. Hence in this model (a branching Brownian motion with drift $-\sqrt 2$ started from one particle at $L$) we take $N_y$ to be the number of particles that at some time $t>0$ reach the line $L-y-(\sqrt 2 - \mu)t$
if particles that hit this line are immediately killed.

As shown in Section 5 of Neveu \cite{nev87}, the process $(Z_u)_{u \ge 0}$ is a continuous-time branching process, and furthermore there exists a random variable $W$ such that, almost surely
as $y \to \infty$,
\begin{equation}\label{convNeveu}
y e^{-\sqrt{2} y} Z_y \to W.
\end{equation}

We now compare the distribution of $N_y$ with that $Z_y$. Observe first that $N_y \ge Z_y$. It is the other direction that requires a few more arguments. Note for instance that $N_y$ may be infinite, although this will happen with exceedingly small probability.
We claim that if $y_N' = y_N + g(2y_N)(\sqrt{2} - \mu)$, then
\begin{equation}\label{E:NyZy}
\P(N_y \leq Z_{y'}) \rightarrow 1
\end{equation}
as $N \to \infty$. Because $y' - y = g(2y)(\sqrt{2} - \mu) \rightarrow 0$ as $N \rightarrow \infty$, this will immediately entail that $y e^{-\sqrt{2} y} N_y $ converges in distribution to $W.$

Define the event $\cC = \{ T_{y'} < g(y')  \}.$  Note that $y' \leq 2y$ for sufficiently large $N$.  Therefore, for sufficiently large $N$, on the event $\cC$, we have that $y+ T_{y'} (\sqrt 2 - \mu) \leq y +  g(2y)(\sqrt 2 - \mu) = y'$.  In this case,
we have that $N_y \le Z_{y'}$, since until $T_{y'}$ the vertical line $L-y'$ is to the left of the oblique line $L-y -(\sqrt 2 - \mu)t$ and hence
particles have more time to branch if we kill at $L-y'.$
Because $\P(\cC) \to 1$ as $N \rightarrow \infty$ by the definition of the function $g$, equation \eqref{E:NyZy} follows.

The second part of the lemma follows easily by the dominated convergence theorem and the fact that $W$ has infinite mean, which is a consequence of Proposition 27 in \cite{bbs}.
\end{proof}

Consider the first $N_y  \wedge D y^{-1} e^{\sqrt{2} y}$ particles that reach $L - y$.  Let ${\hat N}$ be the number of descendants of these particles that reach $L$ within a time $(\log N)^3$ after the parent particle reached $L - y$ (assuming particles are killed when they reach $L$).

\begin{Lemma}\label{L:EN>1}
Choosing $D$ as in Lemma \ref{L:approxNy}, there is a universal $C< \infty$ such that
\begin{equation}\label{EN>1}
\E(\hat N) \ge  2 \; ; \;  \E(\hat N^2 ) \le C
\end{equation}
for all $N$ large enough.
\end{Lemma}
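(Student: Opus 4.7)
The plan is to realise $\hat N$ as the quantity $R$ of Lemma \ref{L:Prop1618} applied to the initial configuration consisting of $K := N_y \wedge D y^{-1} e^{\sqrt 2 y}$ particles at $L - y$. By the strong Markov property at the hitting times of the level $L-y$, the subsequent branching Brownian motions emanating from the first $K$ such particles (with $L$ as a killing boundary) are conditionally independent, and each has the law of a fresh BBM started from a single particle at $L - y$; consequently, counting descendants that reach $L$ within time $(\log N)^3$ after the parent hit $L-y$ gives the same distribution as $R$ from Lemma \ref{L:Prop1618} started from $K$ simultaneous particles at $L-y$.

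I would then compute the two relevant quantities for this initial configuration:
\[
\frac{Z_N(0)}{N(\log N)^2} = K \cdot \frac{e^{\mu(L-y)} \sin(\pi y/L)}{N(\log N)^2} \sim \pi \sqrt 2 \, K y e^{-\sqrt 2 y}, \qquad \frac{Y_N(0)}{N(\log N)^3} \sim K e^{-\sqrt 2 y},
\]
using $e^{\mu L} \sim N (\log N)^3$, $\log N/L \to \sqrt 2$, $\sin(\pi y / L) \sim \pi y/L$ (since $y = o(L)$), and $e^{-\mu y} \sim e^{-\sqrt 2 y}$ (since $(\sqrt 2 - \mu) y \to 0$, a consequence of $y = o(\eps^{-1/2})$ and $\sqrt 2 - \mu = O(\eps)$). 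Since $K \leq D y^{-1} e^{\sqrt 2 y}$ deterministically, these are bounded respectively by a universal constant (essentially $\pi \sqrt 2 \, D$) and by a deterministic sequence $a_N \leq D/y \to 0$. In particular the hypotheses of Lemma \ref{L:Prop1618} hold in the sharpened form in which $o_p(1)$ may be replaced by $o(1)$.

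For the lower bound on the first moment, I would take expectations in \eqref{Prop1618Exp}:
\[
\E(\hat N) \geq \pi \, \E\!\left( \frac{Z_N(0)}{N(\log N)^2} \right) \sim \pi^2 \sqrt 2 \cdot \E\bigl( (y N_y e^{-\sqrt 2 y}) \wedge D \bigr),
\]
and the second conclusion of Lemma \ref{L:approxNy} then gives $\liminf_N \E(\hat N) > \pi^2 \sqrt 2 \cdot \tfrac{2}{\pi^2 \sqrt 2} = 2$, whence $\E(\hat N) \geq 2$ for all $N$ large enough. For the second moment I would apply \eqref{Prop1618Var2}, use the uniform bound $Z_N(0)/(N(\log N)^2) \leq C'$ to control both the linear and quadratic terms, and take expectations; the deterministic bound on $Y_N(0)/(N(\log N)^3)$ turns the $o_p(1)$ remainder into a genuine $o(1)$, yielding $\E(\hat N^2) \leq C$ for a universal constant.

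The main piece of care is twofold: first, verifying the strong-Markov identification so that $\hat N$ truly has the law of $R$ in Lemma \ref{L:Prop1618} applied to the configuration of $K$ particles placed simultaneously at $L-y$; and second, tracking asymptotic constants, since the constant $2/(\pi^2 \sqrt 2)$ in Lemma \ref{L:approxNy} is precisely calibrated to produce, after multiplication by $\pi^2 \sqrt 2$, a \emph{strict} liminf greater than $2$.
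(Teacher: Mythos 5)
Your proposal is correct and follows essentially the same route as the paper: identify $\hat N$ with the quantity $R$ of Lemma \ref{L:Prop1618} for the configuration of $K = N_y \wedge D y^{-1} e^{\sqrt 2 y}$ particles at $L-y$, verify the $Y_N(0)$ hypothesis using the deterministic cap $K \le D y^{-1} e^{\sqrt 2 y}$, take expectations of \eqref{Prop1618Exp} to get $\E(\hat N) \gtrsim \pi^2\sqrt 2\,\E\bigl((y e^{-\sqrt 2 y} N_y)\wedge D\bigr)$, invoke the strict liminf from Lemma \ref{L:approxNy}, and bound $\E(\hat N^2)$ via \eqref{Prop1618Var2} together with the uniform bound on $Z_N(0)/(N(\log N)^2)$. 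Your explicit remark that the deterministic ceiling on $Y_N(0)/(N(\log N)^3)$ upgrades $o_p(1)$ to $o(1)$ is exactly the point the paper uses, stated a bit more carefully.
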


\begin{proof}
We estimate the expectation and second moment of $\hat N$ using Lemma \ref{L:Prop1618}.
Note that, conditionally on $N_y$, $\hat N$ has the same law as the number of descendants of $N_y \wedge Dy^{-1} e^{\sqrt{2}y}$ particles started from $L-y$, that hit $L$ prior to time $(\log N)^3$ (i.e., the time at which the $N_y$ particles reach $L-y$ is irrelevant).
Note that for this process,
$$
Y_N(0) = (N_y \wedge D y^{-1} e^{\sqrt{2}y})  e^{\mu (L-y)} \le  D y^{-1} e^{\mu L} e^{y(\sqrt{2} - \mu)},
$$
so $Y_N(0)/(N(\log N)^3) \rightarrow 0$ as $N \rightarrow \infty$ and we can apply Lemma 8.
Thus, applying the result of Lemma \ref{L:Prop1618} and using the inequality $\sin(x)\ge  (2/\pi)x$ for $x \in [ 0, \pi/2]$,
\begin{align*}
\E(\hat N | N_y) & \ge   \frac{(N_y\wedge Dy^{-1} e^{\sqrt 2 y }) e^{\mu (L - y)}\sin(\pi(L-y)/L) \pi }{N ( \log N)^2} \nonumber \\
&\sim \pi^2 \sqrt{2} (y e^{-\sqrt{2} y} N_y \wedge D),
\end{align*}
where $\sim$ means that the ratio of the two sides tends to one as $N \rightarrow \infty$.  Taking expectations of both sides and applying Lemma \ref{L:approxNy}, we get that
$$
\E(\hat N) \ge 2
$$
for $N$ large enough.

The second moment of $\hat N$ is controlled in a similar fashion.  By Lemma \ref{L:Prop1618},
$$
\E(\hat N^2) \le C  \left(\frac{D y^{-1} e^{\sqrt{2} y} e^{\mu(L-y)} \sin (\pi y /L)}{N ( \log N)^2}\right)^2  + C  \frac{D y^{-1} e^{\sqrt{2} y} e^{\mu(L-y)} \sin (\pi y /L)}{N ( \log N)^2} + o(1),
$$
which is bounded by a constant for sufficiently large $N$.  Equation \eqref{EN>1} is proved.
\end{proof}

For any particle that reaches $L$, we can associate a random variable $\hat N$ as above, which counts the number of offspring of that particle that first hit $L-y$ and then return to $L$ after no more than time $(\log N)^3$.
This gives rise to an (ordinary) Galton-Watson branching process $\cT$ whose offspring distribution is
the distribution of $\hat N$. Note that if this Galton-Watson process survives, then our branching Brownian motion cannot become extinct. To see this, it suffices to check that on the event of survival
there are particles alive at all times. Fix a ray $\xi \in \cT$. Let $T_i$ be the time at which
the $i^\text{th}$ particle of this ray hits $L$. Then $(T_{i+1} - T_i)_{i\ge 1}$ is stochastically bounded below by an i.i.d. sequence of random variables which are strictly positive (being the time for a Brownian motion with drift $\mu$ to hit $L-y$ started from $L$). Thus by the law of large numbers, $\lim_{i \to \infty} T_i = \infty$ almost surely, and hence there are particles alive at all times.
With this in mind, we can prove Lemma \ref{L:compGW} below.  The key tool in the proof will be a second-moment argument that gives a lower bound on the survival probability of a Galton-Watson process.

\begin{Lemma}\label{GWlem}
Let $(p_k)_{k=0}^{\infty}$ be a sequence of nonnegative numbers that sum to 1, and let $X$ be a random variable such that $\P(X = k) = p_k$ for all nonnegative integers $k$.  Let $q$ be the extinction probability of a Galton-Watson process started with a single individual with offspring distribution $p_k$.  Then $$1 - q \geq \frac{2(\E[X] - 1)}{\E[X(X-1)]}.$$
\end{Lemma}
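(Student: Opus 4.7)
My plan is to use the classical probability generating function approach. Let $f(s) = \sum_{k \geq 0} p_k s^k = \E[s^X]$; standard Galton-Watson theory gives that $q$ is the smallest fixed point of $f$ in $[0,1]$. The case $\E[X] \leq 1$ is immediate, since the right-hand side of the claimed inequality is then nonpositive while $1 - q \geq 0$, so I focus on the substantive case $\E[X] > 1$, which in particular forces $\E[X(X-1)] > 0$.

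The heart of the argument is a one-sided second-order Taylor upper bound for $f$ on $[0,1]$. Since $f''(s) = \sum_{k \geq 2} k(k-1) p_k s^{k-2}$ is a power series with nonnegative coefficients, it is nondecreasing on $[0, \infty)$, so $f''(v) \leq f''(1) = \E[X(X-1)]$ for every $v \in [0,1]$. Substituting this bound into the integral remainder $\int_s^1 (v-s) f''(v)\, dv$ in the second-order expansion of $f$ around $s = 1$ yields
$$f(s) \leq 1 - (1-s)\E[X] + \tfrac{1}{2}(1-s)^2 \E[X(X-1)], \qquad s \in [0,1].$$

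To conclude, I would apply this inequality at $s = q$, use $f(q) = q$, rearrange the linear terms into $(1-q)(\E[X]-1) \leq \tfrac{1}{2}(1-q)^2 \E[X(X-1)]$, and divide by $1 - q > 0$. The only point requiring any thought is the Taylor upper bound, whose validity rests on the monotonicity of $f''$; once that is in place, the rest of the proof is purely algebraic. I should also note that a pure second-moment (Paley–Zygmund) estimate on generation $n$ would give $1-q \geq \E[X](\E[X]-1)/\E[X(X-1)]$, which is weaker than the desired bound when $1 < \E[X] < 2$, so it is worth going through the Taylor argument rather than taking this shortcut.
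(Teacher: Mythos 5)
Your proof is correct and follows essentially the same route as the paper: both bound $f''$ by $f''(1)$ to obtain the quadratic upper bound $f(s) \le 1 - (1-s)\E[X] + \tfrac{1}{2}(1-s)^2\E[X(X-1)]$ (the paper integrates $f''\le f''(1)$ twice rather than invoking Taylor's integral remainder, which is the same computation), and then extract the inequality from the fixed-point equation. The paper solves $h(s)=s$ explicitly for the subdominant root and notes $q$ lies below it, whereas you substitute $s=q$ and rearrange directly; these are interchangeable endings, and your explicit dispatch of the trivial case $\E[X]\le 1$ is a harmless addition.
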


\begin{proof}
Let $m = \E[X]$ and $\alpha = \E[X(X-1)]$.  Let $g(s)= \E(s^{X})$ be the generating function of the offspring distribution.
Then by differentiating under the expectation sign, the derivatives of all orders of $g$ are nonnegative, hence in particular $g''(s)$ is nondecreasing and $g''(s) \le g''(1) = \alpha$. Therefore, integrating between $s$ and $1$,
$$
g'(1) - g'(s) =\int_s^1 g''(t) dt \le \alpha(1-s)
$$
and hence
$
g'(s) \ge m - \alpha(1-s).
$
Integrating further gives
$$
g(1) - g(s) = \int_s^1 g'(t) dt \ge m(1-s) - \alpha\int_s^1 (1-t)dt = m(1-s) - \alpha (1-s)^2/2.
$$
Thus
$$
g(s) \le 1-m(1-s) + \alpha (1-s)^2/2 =: h(s).
$$
Thus, the extinction probability $q$ is smaller than the unique $s < 1$ such that the right-hand side $h(s) = s$. Writing $x = 1-s$, this means we are looking for $x>0$ such that $\alpha x^2/2- m x + x = 0$, or $x(1- m + \alpha x/2) = 0$, or $x = 2(m-1)/\alpha$.  The result follows.
\end{proof}

\begin{Lemma}
  \label{L:compGW}
  Assume there is initially a particle at $L$. Then there exists a number $\rho>0$, independent of $N$, such that
  $\P(\cE_N^\complement) \ge \rho$.
\end{Lemma}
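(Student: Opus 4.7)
The plan is to combine the three preceding lemmas in a straightforward way. The Galton-Watson tree $\cT$ constructed just above the statement of the lemma has offspring distribution equal to the law of $\hat N$. By Lemma \ref{L:EN>1}, for all $N$ sufficiently large we have $\E(\hat N) \ge 2$ and $\E(\hat N^2) \le C$, where $C$ is a universal constant. Hence $\E[\hat N(\hat N-1)] = \E(\hat N^2) - \E(\hat N) \le C$, so Lemma \ref{GWlem} applied with $X = \hat N$ gives
\[
1 - q \;\ge\; \frac{2(\E(\hat N)-1)}{\E[\hat N(\hat N-1)]} \;\ge\; \frac{2}{C},
\]
where $q$ is the extinction probability of $\cT$. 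Crucially the right-hand side does not depend on $N$; this uniformity is the whole point of having gotten the constants in Lemma \ref{L:EN>1} to be $N$-free.

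The second and only slightly delicate step is to translate ``$\cT$ survives'' into ``$\cE_N^\complement$ holds,'' i.e.\ into non-extinction of the BBM. The discussion just above the lemma statement has already done this: on the survival event, one can pick a ray $\xi \in \cT$; writing $T_i$ for the time at which the $i$-th individual of $\xi$ visits $L$, the increments $T_{i+1} - T_i$ stochastically dominate an i.i.d.\ sequence of strictly positive random variables (namely the first-passage time of a Brownian motion with drift $-\mu$ from $L$ to $L-y$), so by the law of large numbers $T_i \to \infty$ a.s., and therefore there is some descendant alive at every time. Consequently $\{\cT \text{ survives}\} \subseteq \cE_N^\complement$, and combining this with the previous display yields
\[
\P(\cE_N^\complement) \;\ge\; 1 - q \;\ge\; \frac{2}{C} \;=:\; \rho > 0,
\]
as required.

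There is essentially no hard obstacle here because all the real work has been done in building $\hat N$, controlling its first two moments uniformly in $N$ (Lemma \ref{L:EN>1}), and proving the general second-moment survival bound for Galton--Watson trees (Lemma \ref{GWlem}). The only point to be careful about is that the law of $\hat N$ does depend on $N$, so one must invoke the bounds $\E(\hat N) \ge 2$ and $\E(\hat N^2) \le C$ with constants independent of $N$ before applying the Galton--Watson bound; otherwise the resulting $\rho$ would degrade with $N$. Once that uniformity is in hand, the proof is a one-line combination.
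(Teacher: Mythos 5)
Your proof is correct and follows exactly the same route as the paper's: apply the moment bounds from Lemma \ref{L:EN>1} to Lemma \ref{GWlem} to get a uniform lower bound $2/C$ on the survival probability of $\cT$, then use the ray argument (already laid out in the discussion preceding the lemma) to conclude that survival of $\cT$ implies $\cE_N^\complement$. No substantive difference.
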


\begin{proof}
By the above remark, it suffices to show that the survival probability for the Galton-Watson tree $\cT$, if there is initially one particle at $L$, is bounded away from 0.  By \eqref{EN>1}, we know that $\E[{\hat N}] \geq 2$ and $\E[{\hat N}({\hat N-1})] \leq \E[{\hat N}^2] \leq C$ for sufficiently large $N$.  It thus follows from Lemma \ref{GWlem} that the survival probability is at least
$2/C$ for sufficiently large $N$.  Furthermore, on the event that $\cT$ does not become extinct, we know that $\cE_N^\complement$
holds almost surely. This completes the proof of Lemma \ref{L:compGW}.
\end{proof}

It is now fairly simple to prove the following lemma.
\begin{Lemma}
  \label{L:Zlarge}
Consider branching Brownian motion with drift $\mu$ whose initial configurations satisfy the conditions of Proposition
\ref{P:lim_prob_ext} for a nonzero measure $\nu$.
 Let $\delta>0$. Then there exists $A>0$ such that for all $t>0$ and all large enough $N$,
  \begin{equation}
\P\left(\cE_N \left| \frac{Z_N(t(\log N)^3)}{N ( \log N)^2} > A  \right. \right) \le \delta.
  \end{equation}
\end{Lemma}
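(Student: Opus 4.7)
The plan is to combine the Galton--Watson comparison of Lemma~\ref{L:compGW} with the second-moment estimate of Lemma~\ref{L:Prop1618}, using the strong Markov property at time $s_0 := t(\log N)^3$ to reduce to a statement about the initial time. The auxiliary hypothesis $Y_N(s_0)/(N(\log N)^3) \to 0$ needed by Lemma~\ref{L:Prop1618} can be verified at time $s_0$ with high probability by a moment bound on $Y_N$ analogous to the one on $Z_N$ used in \cite{bbs}.

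For a single-step estimate, let $R$ denote the number of particles that hit $L$ during $(s_0, s_0 + (\log N)^3]$. By Lemma~\ref{L:Prop1618} applied (via the strong Markov property) at $s_0$, on the event $\{Z_N(s_0) > A N(\log N)^2\}$ one has $\E(R \mid \cF_{s_0}) \ge \pi A$ and $\E(R^2 \mid \cF_{s_0}) \le C A^2 + C A + o_p(1)$. The Paley--Zygmund inequality applied conditionally on $\cF_{s_0}$ then yields a universal $p_0 > 0$ (depending only on $C$) such that $\P(R \ge \pi A/2 \mid \cF_{s_0}) \ge p_0$ on this event. Each of the $R$ particles that hits $L$ spawns an independent copy of the Galton--Watson tree $\cT$ of Lemma~\ref{L:compGW}, with survival probability at least $\rho > 0$; since extinction of the BBM requires every such tree to die, on the event $\{R \ge \pi A/2\}$ the conditional extinction probability is at most $(1-\rho)^{\pi A/2}$. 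Therefore
\begin{equation*}
\P\bigl(\cE_N \bigm| Z_N(s_0) > A N(\log N)^2\bigr) \le (1-\rho)^{\pi A/2} + (1 - p_0),
\end{equation*}
which is bounded by a constant strictly less than $1$ for large $A$ but does not tend to zero.

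To close the gap we iterate over disjoint intervals $I_k := (s_0 + (k-1)(\log N)^3,\; s_0 + k(\log N)^3]$, $k = 1, \dots, K$. Provided that at the beginning of $I_k$ the BBM has not yet become extinct and $Z_N$ is still of order $A N(\log N)^2$, the single-step estimate applies to $I_k$ and yields conditional probability at least $p_0$ of producing $\ge \pi A / 2$ new particles at $L$ (each one an independent root of a tree $\cT$). The main obstacle is to show that, conditionally on the failure of the first $k-1$ attempts \emph{and} on non-extinction, $Z_N$ at the start of $I_k$ is still comparable to $A N(\log N)^2$ with positive conditional probability; this is plausible because, on non-extinction, the process should not linger at intermediate scales (cf.\ Lemma~\ref{L:notstuck} for the CSBP limit), but making this precise at the pre-limit level is the crux. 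Once it is in hand, combining $K$ iterations with the single-step bound produces
\begin{equation*}
\P\bigl(\cE_N \bigm| Z_N(s_0) > A N(\log N)^2\bigr) \le (1 - p_0)^K + K (1 - \rho)^{\pi A/2},
\end{equation*}
and choosing $K$ first so that $(1 - p_0)^K < \delta/2$, then $A$ large enough that $K(1-\rho)^{\pi A/2} < \delta/2$, finishes the proof uniformly in $t$ and in all sufficiently large $N$.
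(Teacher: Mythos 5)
Your single-step estimate is essentially sound, but you correctly diagnose that it caps out at a constant strictly less than $1$ (namely $(1-\rho)^{\pi A/2} + (1-p_0)$ with $p_0$ fixed), and the iteration you propose to close the gap has a hole that you yourself flag: after a failed attempt you have no control on $Z_N$ at the start of the next window, and establishing that it stays comparable to $AN(\log N)^2$ conditionally on failure and non-extinction is not something you can extract from the available lemmas. This is a genuine gap, not a technicality; the temporal-iteration route would require a pre-limit analogue of Lemma~\ref{L:notstuck} that the paper does not have and that your sketch does not supply.

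The paper sidesteps the iteration entirely by a different decomposition. The observation is that no single particle can contribute more than $D N(\log N)^2$ to $Z_N$ for some universal $D$ (each term $e^{\mu X_i}\sin(\pi X_i/L)$ is bounded above uniformly in $X_i$). Hence, if $Z_N(s_0) > A N(\log N)^2$ with $A = (D+1)r$, the population at time $s_0$ can be split into at least $r$ disjoint groups $I_1,\dots,I_r$ each of which contributes at least $N(\log N)^2$ to $Z_N$. By the branching property the groups evolve independently after $s_0$, and the second-moment argument (which is your step) gives each group probability at least $\eta > 0$ of sending a particle to $L$ within one window of length $(\log N)^3$. Since the $r$ trials are independent (at a single time, via branching, not sequentially in time), stochastic domination by i.i.d.\ Bernoulli$(\eta)$ variables lets one take $r$ large to make $\geq K$ hits likely, and then $K$ is chosen in advance so that $(1-\rho)^K \le \delta/2$. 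This \emph{spatial} splitting is the idea you are missing; it delivers independence without having to re-establish that $Z_N$ stays large.
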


\begin{proof}
Let $t>0$. Choose $K$ such that $(1- \rho)^K \le \delta/2$, and let $\cB$ be the event that at least $K$ particles ever reach $L$. Define the event $\cA$ by
\begin{equation}\label{A}
\cA_N: = \{Z_{N}(t (\log N)^3) /N ( \log N)^2 > A\}
\end{equation}
for some $A>0$ to be defined later. Intuitively, we want to take $A>0$ large enough that $\P(\cB|\cA_N)>1-\delta/2$, for then we know by Lemma \ref{L:compGW} that survival will occur with probability at least $1-\delta$. Let $R$ be the number of particles that hit $L$ between times $t ( \log N)^3$ and $(t+1) ( \log N)^3$. Note first that if $\cF = \cF_{ t( \log N)^3}$,
\begin{align*}
  \P(R\ge 1 | \cF) & \ge \frac{\E(R | \cF)^2}{\E(R^2 | \cF)}.
\end{align*}
Lemma 20 of \cite{bbs} establishes that $Y_N(t(\log N)^3)/(N(\log N)^3)$ converges to 0 in probability as $N \rightarrow \infty$.  Therefore, by applying Lemma 8 and writing $V = Z_N(t (\log N)^3)/ (N(\log N)^2)$, we get
\begin{align*}
  \P(R\ge 1 | \cF) & \ge \frac{\pi^2 V^2}{CV^2 + CV + \delta_N},
\end{align*}
where $\delta_N \rightarrow 0$ in probability as $N \rightarrow \infty$.  Let $\cG_N$ be the event that $\delta_N \leq 1$.
Choose $0 < 2\eta < \pi^2/(2C + 1)$.  If $A \geq 1$, then $\pi^2V^2/(CV^2 + CV) \geq \pi^2/(2C + 1)$ on the event $\cA_N \cap \cG_N$, so
$$
\P(R \ge 1 | \cA_N) \geq \frac{\E(2\eta \mathbf{1}_{\cA_N \cap \cG_N})}{\P(\cA_N)}.
$$
Because $P(\cG_N) \rightarrow 1$ as $N \rightarrow \infty$ but $\liminf_{N \rightarrow \infty} P(\cA_N) > 0$ by Theorem 2 of \cite{bbs}, it follows that $\P(R\ge 1 \vert \cA) \ge \eta$ for sufficiently large $N$.

Let $(B_1, B_2, \ldots)$ be a sequence of i.i.d. Bernoulli random variables with success probability $\eta$.  Choose $r\ge 1$ so that $\P(\sum_{i=1}^r B_i \ge K) \ge 1- \delta/2$.  Observe that there is a universal constant $D > 1$ such that the largest contribution to $Z_N$ from any given particle is at most $D N ( \log N)^2$.  Now choose $A = (D+1)r$.  Then the population of particles can be broken into at least $r = A/(D+1)$ groups of particles, corresponding to index sets $I_1, \ldots, I_r$, such that for all $1\le j \le r$, we have
$$
\sum_{i \in I_j} e^{\mu X_i(t ( \log N)^3)} \sin\left(\frac{\pi X_i(t ( \log N)^3)}{L}\right) \ge N ( \log N)^2.
$$
If $1\le j \le r$, let $R_j$ denote the number of descendants of a particle in $I_j$ at time $t ( \log N)^3$ that hit $L$ before time $(t+1) ( \log N)^3$.
Then it was shown above that for sufficiently large $N$, we have $\P(R_j\ge 1 | \cA) \ge \eta$ for all $1\le j \le r$. Moreover by the branching property, the variables $(R_1, \ldots, R_r)$ are conditionally independent given $\cF$.  This leads to the desired inequality
\begin{equation}\label{BlikelyifA}
\P(\cB | \cA) \ge 1- \delta/2.
\end{equation}
Now,
\begin{equation}\label{Zlarge1}
\P(\cE_N| \cA) \le \P(\cB^c | \cA) + \P(\cE_N| \cA \cap \cB).
\end{equation}
Note that the first term in the right-hand side is smaller than $\delta/2$ by \eqref{BlikelyifA}, while by the branching property, and by Lemma \ref{L:compGW},
$$
\P(\cE_N| \cA \cap \cB) \le (1- \rho)^K \le \delta/2
$$
by choice of $K$. This finishes the proof of Lemma \ref{L:Zlarge}.
\end{proof}

Conversely, we need a result which tells us that if $Z_N(t) < \ep N ( \log N)^2$ then the process is likely to die out. This is done in the following simple lemma.

\begin{Lemma} \label{L:ext_if_small}
Consider branching Brownian motion with drift $\mu$ whose initial configurations satisfy the conditions of Proposition
\ref{P:lim_prob_ext}.
For all $\delta>0$ there is $\ep>0$ such that for all $t>0$, if $\cD= \{Z_N(t( \log N)^3) < \ep N ( \log N)^2\}$, then $\P( \cE_N | \cD) \ge 1- \delta$.
\end{Lemma}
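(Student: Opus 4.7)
The idea is to apply the Markov property at time $t(\log N)^3$ and then bound the survival probability of the resulting (small-$Z_N$) configuration by a sum of single-particle survival probabilities $Q_\mu(X_i(0))$, which are in turn controlled pointwise by comparison to the principal eigenfunction $F(x) = e^{\mu x}\sin(\pi x/L)$.

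By the Markov property, it suffices to show that for any configuration with $Z_N(0) \leq \epsilon N(\log N)^2$ and $Y_N(0) = o_p(N(\log N)^3)$ (the latter holding at time $t(\log N)^3$ by Lemma 20 of \cite{bbs}), the extinction probability is at least $1-\delta$ once $\epsilon$ is small enough. By the branching property,
\[
\P(\cE_N^{\complement} \mid \cF_0) \leq \sum_{i=1}^{M_N(0)} Q_\mu(X_i(0)).
\]

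The key technical ingredient is the pointwise bound $Q_\mu(x) \leq Q_\mu(L') F(x)/F(L')$ on $[0, L']$, valid for any $L' < L$. To establish this, compare the BBM to the BBM $\hat X^{L'}$ killed at both $0$ and $L'$: since $L' < L$, the principal eigenvalue $1 - \mu^2/2 - \pi^2/(2L'^2)$ is negative, so $\hat X^{L'}$ is subcritical and dies out a.s. Survival of the original BBM thus requires an ``escape'' through $L'$; writing $N_{L'}$ for the number of such escapes and using $1 - (1-p)^n \leq np$, one obtains $Q_\mu(x) \leq Q_\mu(L') \cdot \E_x[N_{L'}]$. A many-to-one computation identifies $\E_x[N_{L'}]$ as the solution of $\tfrac12 u'' - \mu u' + u = 0$ on $[0, L']$ with $u(0) = 0$, $u(L') = 1$, which by an explicit calculation equals $F(x)/F(L')$.

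Choosing $L' = L - 1$ and using $F(L-1) \asymp N(\log N)^2$ together with $Q_\mu(L-1) \leq 1$, one gets $Q_\mu(x) \leq CF(x)/(N(\log N)^2)$ uniformly on $[0, L-1]$. Summing over particles with $X_i(0) \leq L-1$ gives
\[
\sum_{i:\, X_i(0) \leq L-1} Q_\mu(X_i(0)) \leq \frac{C\, Z_N(0)}{N(\log N)^2} \leq C\epsilon,
\]
which is at most $\delta/2$ for $\epsilon$ small. Particles with $X_i(0) > L-1$ are handled by the $Y_N$ condition: each such particle contributes at least $e^{\mu(L-1)} \asymp N(\log N)^3/e^{\mu}$ to $Y_N(0) = o_p(N(\log N)^3)$, so their total number is $o_p(1)$ and vanishes with probability tending to one. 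The main obstacle is this boundary layer near $L$: since $F$ vanishes at $L$ while $Q_\mu$ does not, the pointwise comparison cannot be extended there, and the $Y_N$ hypothesis plays an essential role in ruling out particles in a fixed neighborhood of $L$. A more delicate alternative would use a moving threshold $L' = L - y_N$ with $y_N \to \infty$ slowly, balancing the blow-up of $Q_\mu(L-y_N)/F(L-y_N)$ against the decreasing number of particles above $L - y_N$.
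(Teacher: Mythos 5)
Your argument is correct, but it packages the estimate differently from the paper. The paper works directly with the process killed at the \emph{critical} level $L$: it observes that $\bar Z(s)=\sum_i e^{\mu \bar X_i(s)}\sin(\pi \bar X_i(s)/L)$ is a nonnegative martingale which converges a.s.\ to $0$ (the two-sided killed process dies out, which, since that process is exactly critical, itself requires a short argument via boundedness of $\bar Z$ and the strong Markov property), and then applies the optional stopping theorem: started from $\bar Z(0)\le \ep N(\log N)^2$, the probability that $\bar Z$ ever reaches $cN(\log N)^2$ is at most $\ep/c$. Since any particle reaching $L$ must pass through $L-1$, at which instant $\bar Z\ge e^{\mu(L-1)}\sin(\pi/L)\ge cN(\log N)^2$, the probability that the two processes ever decouple is at most $\ep/c$, and $\cE_N^\complement$ is contained in this event. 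You instead decompose by particles, $\P(\cE_N^\complement\mid\cF_0)\le\sum_i Q_\mu(X_i(0))$, and reduce to a pointwise inequality $Q_\mu(x)\le Q_\mu(L')\,e^{\mu x}\sin(\pi x/L)/\bigl(e^{\mu L'}\sin(\pi L'/L)\bigr)$ for $L'<L$, which you obtain by a union bound over escapes through $L'$ combined with the first-moment identity for the expected number of such escapes (itself another face of optional stopping for the additive martingale, now with killing at the strictly subcritical level $L'$). The two proofs are controlled by essentially the same quantity, namely $Z_N(0)/\bigl(e^{\mu(L-1)}\sin(\pi/L)\bigr)\approx\ep/c$, so neither is sharper; the paper's is more compact and avoids the single-particle lemma, while yours is more modular (a clean comparison inequality for $Q_\mu$, which has independent interest) and avoids having to argue extinction of the critical two-sided killed process by replacing $L$ with a genuinely subcritical $L'$. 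Both treat the boundary layer the same way, using $Y_N(0)=o_p(N(\log N)^3)$ (via Lemma~20 of \cite{bbs}) to discard particles above $L-1$, which is unavoidable since the eigenfunction vanishes at $L$ while $Q_\mu$ does not.
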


\begin{proof}
We may assume that $t=0$ and that all particles are initially to the left of $L-1$ (because if there were one particle to the right of $L-1$ then $Y_N(t ( \log N)^3)$ would be greater than $e^{\mu(L-1)}$, an event which has probability tending to zero by
Lemma 20 of \cite{bbs}.  Let $\bar Z(s) = \sum_{i=1}^{\bar M_N(s)} e^{\mu \bar X_i(s)} \sin ( \pi \bar X_i(s) / L)$, where $\bar X_i(s)$ is the position of the $i^{\text{th}}$ particle when particles are killed upon hitting $L$. Then $\bar Z$ is a nonnegative martingale, and thus converges almost surely (and is hence almost surely bounded). It is easy to see that this limit may only be zero, i.e., that all particles eventually die out, either by hitting $L$ or by hitting 0. (Indeed, otherwise, there is some $\eta>0$ such that the interval $[\eta, L-\eta]$ has particles at an unbounded set of times. An application of the strong Markov property then shows that the number of particles in this interval eventually exceeds any number, which contradicts the almost sure boundedness of $\bar Z$ as a function of time). We now claim that with overwhelming probability, all particles must die by hitting 0. Indeed, let $\tau_x = \inf\{t\ge 0: Z(t) \le x N ( \log N)^2 \}$ and let $\tau'_x = \inf\{t\ge 0: Z_t \ge x N( \log N)^2\}$. Since $\bar Z$ is a martingale which makes bounded jumps (so that $(\bar Z_{t \wedge \tau'_x \wedge \tau_0}, t\ge 0)$ is bounded), by the optional stopping theorem,
\begin{equation}\label{ost}
\P( \tau'_x < \tau_0 | \cD) \le \ep/x.
\end{equation}
Assume that at least one particle hits $L$. On this event, then an ancestor of this particle must have hit level $L-1$ since initially all particles are to the left of $L-1$.
But at that point, $\bar Z(s)$ is at least $e^{\mu(L-1)} \sin(\pi /L) \ge c N( \log N)^2$, where $c > 0$ is a positive constant. Therefore, on this event $\tau'_c < \tau_0$. Thus the probability of this event is, by \eqref{ost}, at most $\ep/c$. Thus it suffices to choose $\ep>0$ such that $\ep/c < \delta$ and the statement of the lemma holds.
\end{proof}

\begin{proof}[Proof of Proposition \ref{P:lim_prob_ext}] Let $\delta >0$. Choose $\ep>0$ small enough that
$\P_\ep(\cE) \ge 1- \delta$ and small enough that the conclusion of Lemma \ref{L:ext_if_small} holds.
Choose $A>0$ large enough that Lemma \ref{L:Zlarge} holds.
By Lemmas \ref{L:notstuck} and \ref{L:extprobinf_approx} we may now fix $t$ such that $|\P_\nu (\cE) - \P_\nu(Z_t < \ep)| \le 3\delta$ and also such that $\P_\nu(Z_t \in [\ep, A]) \le \delta$.

Note that, letting $V_N(t) = Z(t ( \log N)^3) / (N( \log N)^2)$,
\begin{align*}
\P(\cE_N) &= \P\left(\cE_N; V_N(t) < \ep\right) + \P\left(\cE_N; V_N(t) \in [\ep,A] \right)  + \P\left(\cE_N; V_N(t) > A\right)
\end{align*}
and thus
\begin{align}
|\P(\cE_N) - \P_\nu(\cE)| & \le |\P_\nu(\cE) - \P_\nu(Z_t < \ep) | + \left|\P\left(\cE_N; V_N(t) < \ep\right)  - \P_\nu(Z_t< \ep)\right|  \nonumber \\
& +  \P\left(\cE_N; V_N(t) \in [\ep,A] \right) + \P\left(\cE_N; V_N(t) > A\right).
\label{extinbd4}
\end{align}
We bound these four terms separately. By Lemma \ref{L:Zlarge} the fourth term is smaller than $\delta$, and the first one is smaller than $3\delta$ by choice of $t$. The third term is smaller than $2\delta$ for $N$ large enough: indeed by Proposition 1 in \cite{bbs}, $Z_N(t (\log N)^3)/N(\log N)^2 $ converges in distribution towards $Z_t$ started from $\nu$, and the set $[\ep, A]$ is closed.
Hence, the limsup of the probability in the third term is at most $\P_\nu(Z_t \in [\ep, A])$, which is at most $\delta$ by choice of $t$.

It remains to deal with the second term. We observe that on the one hand,
\begin{equation}\label{extinbd2}
\P\left(\cE_N; V_N(t) < \ep\right)  - \P_\nu(Z_t< \ep) \le \P\left( V_N(t) < \ep\right) - \P_\nu(Z_t < \ep)
\end{equation}
which converges to 0 again by Proposition 1 in \cite{bbs} and the fact that $\P_\nu(Z_t = \ep) =0$. Thus the left hand side is smaller or equal to $\delta$ for $N$ large enough. On the other hand, by choice of $\ep$ (and Lemma \ref{L:ext_if_small}),
\begin{equation}\label{extinbd3}
\P\left(\cE_N; V_N(t) < \ep\right)  - \P_\nu(Z_t< \ep) \ge \P\left( V_N(t) < \ep\right)(1-\delta) - \P_\nu(Z_t < \ep).
\end{equation}
The right-hand side converges to $- \delta \P_\nu(Z_t< \ep) \ge - \delta$. Hence for $N$ large enough, it is greater or equal to $-2 \delta$. Putting together \eqref{extinbd2} and \eqref{extinbd3}, we conclude that for $N$ large enough
$$
\left|\P\left(\cE_N; V_N(t)< \ep\right)  - \P_\nu(Z_t< \ep)\right| \le 2 \delta.
$$
Plugging back into \eqref{extinbd4}, we obtain that
$$
|\P(\cE_N) - \P_\nu(\cE)| \le 8 \delta
$$
for $N$ large enough, which proves Proposition \ref{P:lim_prob_ext}.
\end{proof}

\begin{proof}[Proof of Theorem \ref{T:kpplim}]

Recall that $y_N$ is a sequence tending slowly to infinity.  As before, we often drop the subscript $N$ and just write $y$ instead of $y_N$. Denote by $Q^{N}_m (\cE_N)$ the probability of extinction when we start from a configuration of particles whose positions are encoded in an atomic measure $m$,
and let $\E^{N}_m(\cdot)$ denote the corresponding expectation.
We want to compute the probability $Q^{N}_{\delta(L+x)}(\cE_N)= 1 - Q_\mu (L+x)$ of extinction starting with one particle at $L+x$, where $\delta(x)$ denotes the unit Dirac mass at $x$. Let $N_{y}$ be the number of particles that hit $L-y$ when particles are killed at $L-y$.
Then because each of these particles must eventually become extinct for the whole process to die out, we have that
\begin{equation}\label{E:from L+x to L-y}
1 - Q_{\mu}(L+x) = \E^{N}_{\delta(L+x)} \left[Q^{N}_{N_y \delta(L-y)}(\cE_N)\right]. 
\end{equation}
Indeed, for extinction it is irrelevant that these $N_y$ particles reach $L - y$ at different times.

Therefore, we may consider a sequence of random initial configurations consisting of $N_y$ particles at position $L-y$.
Using Lemma \ref{L:approxNy} we see that
\begin{equation}\label{E:Nyx}
N_y  y e^{-\sqrt 2 y} \to_d e^{\sqrt 2 x} W,
\end{equation}
where $W$ is the limiting random variable in \eqref{E:conv to W}.
We claim that this sequence of initial configurations satisfies almost surely the conditions of Proposition  \ref{P:lim_prob_ext} (which are those of Proposition 1 in \cite{bbs}).  This will establish the first part of Theorem \ref{T:kpplim}.  Indeed, we have
\begin{equation}
\frac{Z_N(0)}{N (\log N)^2} =  \frac{ N_y  \sin (\pi y/L) e^{\mu (L-y)} }{N (\log N)^2 }  \to_d \pi \sqrt{2} e^{\sqrt{2} x} W.
\end{equation}
Moreover,
\begin{equation}
\frac{Y_N(0)}{N (\log N)^3} =  \frac{N_y e^{\mu (L-y)} }{N(\log N)^3} = \frac{e^{\mu L}}{N (\log N)^3} \cdot
N_y y e^{-\mu y} \cdot \frac{1}{y} \rightarrow_p 0
\end{equation}
because the first factor converges to 1, the third converges to 0, and the second converges in distribution
to $e^{\sqrt{2} x}W.$
This proves the first half of the theorem.

According to Proposition \ref{P:lim_prob_ext} and Lemma \ref{L:extprobinf}, if we take a sequence of initial configurations such that
$$
\frac{Z_N(0)}{N(\log N)^2} \to c W
$$
in distribution, where $c$ is a constant, and
$Y_N(0)/(N(\log N)^3)$ converges to $0$ in probability, then
\begin{equation}\label{E:proba ext}
\lim_{N \to \infty} Q^{N}_{m}(\cE_N) = \P_{\nu}( \cE) = \E(e^{ -c \alpha W})
\end{equation}
where $m$ is the atomic measure representing the initial configuration of particles that satisfies the above conditions, $\nu$ is the law of $cW$, and $\alpha = \exp (-a/2\pi^2)$.

Therefore, by (\ref{E:from L+x to L-y}), (\ref{E:proba ext}), and (\ref{E:Nyx})  we conclude
$$
\lim_{N\to \infty} Q^{N}_{\delta(L+x)}(\cE_N) = \E \left[  e^{ - \alpha \pi \sqrt{2} e^{\sqrt 2 x} W } \right]= \E \left[  e^{ - e^{\sqrt 2 (x+\beta)} W } \right]
$$
where $\beta = \log (\alpha \pi \sqrt{2}) / \sqrt 2 = (\log(\pi \sqrt{2}) - a/2 \pi^2)/\sqrt{2}$.

It was shown in \cite{nev87} (see also Proposition 24 in \cite{bbs}) that
\begin{equation}\label{LapTW}
\E[e^{-e^{\sqrt{2} u} W}] = \psi(u),
\end{equation}
where $\psi: \R \rightarrow (0, 1)$ solves Kolmogorov's equation
\begin{equation}\label{kolmeq2}
\frac{1}{2} \psi'' - \sqrt{2} \psi' = \psi(1 - \psi).
\end{equation}
Thus
$$
\lim_{N \to \infty} Q^{N}_{\delta(L+x)}(\cE_N) = \psi(x+\beta).$$
Letting $\theta(x) = 1 - \psi(x + \beta)$, we get $$\lim_{N \rightarrow \infty} 1 - Q^{N}_{\delta(L+x)}(\cE_N) = \theta(x),$$
and from (\ref{kolmeq2}) it is easily verified that $\theta$ satisfies \eqref{kolmeq}.
This finishes the proof of Theorem \ref{T:kpplim}.
\end{proof}

\section{Proof of Theorem \ref{T:survivalthm}}

Throughout this section, we consider branching Brownian motion with a drift to the left of $\mu$ started with a single particle at $x$, where $x$ depends on $\epsilon$ and therefore on $N$.
The proof of Theorem \ref{T:survivalthm} will follow directly from the next three lemmas.
Fix $\alpha>0$.
Let $R$ be the total number of particles that hit $L- \alpha$ when particles are killed upon hitting this barrier.  (Note that $\sqrt{2} \alpha$ corresponds to the parameter $A$ in \cite{bbs}.)  The first lemma controls the expectation of this number.

\begin{Lemma} \label{L: E(R)}
For each $\alpha >0$ fixed,
$$\E[R] = \frac{e^{\sqrt 2 \alpha}}{\pi \sqrt 2 \alpha} e^{\mu x} \sin \left( \frac{\pi x}{L-\alpha} \right) \frac1{N (\log N)^2} (1+C_{N})$$
where $C_{N} \to 0$ as $N\to \infty.$
\end{Lemma}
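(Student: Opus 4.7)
The plan is to reduce the computation to a first-passage problem for a single Brownian motion and then solve an elementary ODE. Let $T_y$ denote the first hitting time of level $y$ by a standard Brownian motion with drift $-\mu$ started from $x$. Since each living particle performs a Brownian motion with drift $-\mu$ and the expected population size at time $t$ (ignoring killing) equals $e^t$, the many-to-one formula applied to the first exit time of $(0, L-\alpha)$ yields
\[
\E[R] \;=\; \E_x\bigl[\,e^{T_{L-\alpha}}\,\mathbf{1}_{\{T_{L-\alpha} < T_0\}}\,\bigr] \;=:\; u(x).
\]

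By Feynman-Kac, $u$ is the unique solution on $[0, L-\alpha]$ of $\tfrac12 u'' - \mu u' + u = 0$ with boundary values $u(0)=0$ and $u(L-\alpha)=1$. The characteristic equation $\lambda^2 - 2\mu\lambda + 2 = 0$ has complex roots $\mu \pm i\sqrt{\eps}$, and since $\sqrt{\eps} = \pi/L$ the general solution reads $u(x) = e^{\mu x}\bigl(A\cos(\pi x/L) + B\sin(\pi x/L)\bigr)$. Enforcing the two boundary conditions gives
\[
u(x) \;=\; e^{\mu(x - (L-\alpha))}\,\frac{\sin(\pi x/L)}{\sin(\pi\alpha/L)}.
\]

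It remains to extract the stated asymptotics as $\eps \to 0$, equivalently $N\to\infty$. Three elementary estimates are needed: (i) $\sin(\pi\alpha/L) = (\pi\alpha/L)(1+o(1))$ for fixed $\alpha>0$; (ii) $(\sqrt{2}-\mu)L = \pi\sqrt{\eps}/(2\sqrt{2})\,(1+o(1)) \to 0$, so that $e^{-\mu(L-\alpha)} = e^{\sqrt{2}\alpha}\,e^{-\sqrt{2}L}(1+o(1))$; and (iii) the \cite{bbs} parameterization $L = (\log N + 3\log\log N)/\sqrt{2}$ gives $e^{-\sqrt{2}L} = 1/(N(\log N)^3)$ and $L = (\log N)/\sqrt{2}\,(1+o(1))$. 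Plugging these into the closed form produces the desired expression, except that the sine factor appears as $\sin(\pi x/L)$ rather than $\sin(\pi x/(L-\alpha))$. A first-order Taylor comparison shows that $\sin(\pi x/L)/\sin(\pi x/(L-\alpha)) \to 1$ in any regime where $L-x\to\infty$ (handling separately the cases ``$x$ bounded'' and ``$x = L-\beta$ with $\beta\to\infty$''), so the replacement is legitimate up to a further multiplicative $(1+o(1))$ factor.

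The only real care needed is bookkeeping: collecting all the small multiplicative errors from steps (i)--(iii) and the final sine replacement into a single factor $1 + C_N$ with $C_N \to 0$, uniformly enough in $x$ to be useable in the subsequent proof of Theorem~\ref{T:survivalthm}.
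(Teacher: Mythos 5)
Your route is correct and genuinely different from the paper's. The paper goes through Lemma~15 of \cite{bbs} (which gives an approximation, valid for $t\gg L^2$, of the rate at which particles hit $L-\alpha$), integrates this rate over $[\tau_N,\infty)$, and then uses a separate supermartingale argument to show the contribution from $[0,\tau_N]$ is negligible. You instead observe that the stopping line ``first exit of $(0,L-\alpha)$'' is dissecting, invoke the many-to-one lemma for dissecting stopping lines (e.g.\ Biggins--Kyprianou, or Chauvin's stopping-line theory) to write $\E[R]=\E_x[e^{T}\mathbf{1}_{\{\xi_T=L-\alpha\}}]=u(x)$, and then note that $u$ solves the boundary-value problem $\tfrac12u''-\mu u'+u=0$, $u(0)=0$, $u(L-\alpha)=1$. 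The Feynman--Kac representation is justified here because the branching rate $1$ sits strictly below the principal Dirichlet eigenvalue $\mu^2/2+\pi^2/(2(L-\alpha)^2)$ of the generator on $(0,L-\alpha)$ (this is exactly $1-\mu^2/2-\pi^2/(2L^2)=0$ plus $\alpha>0$), so $\E_x[e^T]<\infty$ and the stopped process $e^{t\wedge T}u(\xi_{t\wedge T})$ is a uniformly integrable martingale. Your closed form $u(x)=e^{\mu(x-(L-\alpha))}\sin(\pi x/L)/\sin(\pi\alpha/L)$ and the subsequent asymptotics are correct.

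What each approach buys: your argument replaces two approximate estimates (the time-integration of Lemma 15 and the supermartingale cutoff) with a single exact identity, which is shorter and conceptually cleaner, and the resulting explicit formula even simplifies the bound on $h(y)$ needed later in Lemma~\ref{L:E(R /R>0)}. The paper's route, on the other hand, reuses the machinery already set up in \cite{bbs}, and its intermediate bound (equation~\eqref{hy2}) with $\theta_N=1$ gives an error term that is uniform over the starting point, which is exactly what that later lemma needs. Two small things you should make explicit: (i) state and cite the many-to-one lemma for dissecting stopping lines, since it is not quite the fixed-time version; (ii) note, as you implicitly do, that the statement of the lemma and your sine-replacement both use the running hypothesis $L-x\to\infty$ of Theorem~\ref{T:survivalthm}, under which $\sin(\pi x/L)/\sin(\pi x/(L-\alpha))\to1$.
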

\begin{proof}[Proof of Lemma \ref{L: E(R)}]
Start with a particle at $x$.
By Lemma 15 of \cite{bbs} with $A=\sqrt{2} \alpha$, the ``rate" at which particles are hitting $L-\alpha$ at time $t$, for $t \gg L^2$, is approximately
$$\rho^{(N)}(t) = 2 \pi e^{\sqrt{2} \alpha} e^{(1 - \mu^2/2 - 2 \pi^2/(L-\alpha)^2)t} e^{\mu x} \sin \bigg( \frac{\pi x}{L - \alpha} \bigg) \frac{1}{N (\log N)^5}.$$
More precisely, define $R^{(N)}([t,t+\delta])$ to be the number of particles that reach $L - \alpha$ between times $t$ and $t+\delta$.  Then we have the bounds
\begin{equation}\label{E:R}
\delta \rho^{(N)} (t) (1+E_{N,t}) (1 + C^{(1)}_{N, \delta}) \leq \E [R^{(N)}([t,t+\delta])] \leq \delta \rho^{(N)} (t) (1+E_{N,t}) (1 + C^{(2)}_{N, \delta}),
\end{equation}
where $E_{N,t}$ is the constant defined by equation (16) in \cite{bbs} which satisfies
\begin{equation}\label{E:bound on E}
\vert E_{N,t} \vert \le  \sum_{n=2}^\infty n^2 e^{-\pi^2 (n^2-1) t/2(L - \alpha)^2},
\end{equation}
and $$\lim_{\delta \to 0} \lim_{N \to \infty} C^{(i)}_{N, \delta} = 0$$ for $i = 1, 2$.  Note that it can be seen from the proof of Lemma 15 in \cite{bbs} that the constants $C^{(i)}_{N, \delta}$ do not depend on $t$.

Let $\tau_N = \theta_N (\log N)^2$, where $1 \ll \theta_N \ll \log N$ as $N \rightarrow \infty$.  We have
\begin{align}
\E[R^{(N)}([\tau_N, \infty))] &\geq \sum_{k=0}^{\infty} \E[R^{(N)}([\tau_N + k \delta, \tau_N + (k+1) \delta])] \nonumber \\
&\geq \delta (1 + C_{N, \delta}^{(1)}) \bigg( 1 - \sum_{n=2}^{\infty} n^2 e^{-\pi^2 (n^2 - 1)\tau_N/2(L - \alpha)^2} \bigg) \sum_{k=0}^{\infty} \rho^{(N)}(\tau_N + k \delta) \nonumber \\
&= \delta (1 + C_{N, \delta}^{(1)}) \bigg( 1 - \sum_{n=2}^{\infty} n^2 e^{-\pi^2 (n^2 - 1)\tau_N/2(L - \alpha)^2} \bigg) \frac{\rho^{(N)}(\tau_N)}{1 - e^{(1 - \mu^2/2 - 2 \pi^2/(L - \alpha)^2)\delta}} \nonumber
\end{align}
and likewise
$$\E[R^{(N)}([\tau_N, \infty))] \leq \delta (1 + C_{N, \delta}^{(2)}) \bigg( 1 + \sum_{n=2}^{\infty} n^2 e^{-\pi^2 (n^2 - 1)\tau_N/2(L - \alpha)^2} \bigg) \frac{\rho^{(N)}(\tau_N)}{1 - e^{(1 - \mu^2/2 - 2 \pi^2/(L - \alpha)^2)\delta}}.$$
As $N \rightarrow \infty$, we have (see equation (38) of \cite{bbs}) $$1 - \frac{\mu^2}{2} - \frac{\pi^2}{2(L - \alpha)^2} \sim - \frac{2 \sqrt{2} \pi^2 \alpha}{(\log N)^3}$$ and therefore, since $\tau_N \ll (\log N)^3$,
$$\rho^{(N)}(\tau_N) \sim 2 \pi
e^{\sqrt{2} \alpha} e^{\mu x} \sin \bigg( \frac{\pi x}{L - \alpha} \bigg) \frac{1}{N (\log N)^5}.$$  By letting $N \rightarrow \infty$ and comparing the upper and lower bounds, then taking $\delta \rightarrow 0$, we get
\begin{equation}\label{newERN}
\E[R^{(N)}([\tau_N, \infty))] \sim \frac{e^{\sqrt 2 \alpha}}{\pi \sqrt 2 \alpha} e^{\mu x} \sin \left( \frac{\pi x}{L-\alpha} \right) \frac1{N (\log N)^2}.
\end{equation}

To complete the proof of Lemma \ref{L: E(R)}, it suffices to show that $\E[R^{(N)}([0, \tau_N])]$ is much smaller than the right-hand side of (\ref{newERN}).  We do this by following the argument at the beginning of the proof of Proposition 16 in \cite{bbs}.
Recall from Lemma 6 of \cite{bbs} that $$V(t) = \sum_{i=1}^{M(t)} X_i(t) e^{\mu X_i(t) + (\mu^2/2 - 1)t}$$ defines a martingale for branching Brownian motion with particles killed at the origin.  Furthermore, we see that
because $\mu^2/2 - 1 < 0$, this process remains a supermartingale if particles are stopped, but not killed, when reaching $L - \alpha$.  Therefore, $$V(t) = R^{(N)}([0, t]) (L - \alpha) e^{\mu (L - \alpha) + (\mu^2/2 - 1)t} + \sum_{i=1}^{M(t)} X_i(t) e^{\mu X_i(t) + (\mu^2/2 - 1)t}$$ defines a supermartingale.  It follows that
$$x e^{\mu x} = V(0) \geq \E[V(\tau_N)] \geq \E[R^{(N)}([0, \tau_N])] (L - \alpha) e^{\mu(L - \alpha) + (\mu^2/2 - 1)\tau_N}.$$  Now since $-(\mu^2/2 - 1)(\log N)^2$ is bounded by some constant $C$, we get the bound
\begin{equation}\label{hy2}
\E[R^{(N)}([0, \tau_N])] \leq \frac{ x e^{\mu x}}{L - \alpha} e^{-\mu(L - \alpha)}e^{C\theta_N} \leq \frac{C' x e^{\mu x} e^{\mu \alpha} e^{C \theta_N}}{N(\log N)^4}
\end{equation}
for some other constant $C'$.
This expression will be much smaller than the right-hand side of (\ref{newERN}) as $N \rightarrow \infty$ provided that $$\frac{x e^{C \theta_N}}{(\log N)^2} \ll \sin \bigg( \frac{\pi x}{L - \alpha} \bigg).$$  Because $L - x \gg 1$, this can be arranged by making sure that $\theta_N \rightarrow \infty$ sufficiently slowly.
\end{proof}

The second Lemma tells us that the expectation above is not dominated by a very small probability of a large number of particles reaching $L-\alpha.$
\begin{Lemma} \label{L:E(R /R>0)}
Assume $\alpha \geq 1$.
There exists $C>0$
not depending on $\alpha$ and a sequence $C_{N,\alpha}$ tending to zero as $N \rightarrow \infty$ for each fixed $\alpha$ such that
$$\E(R^2)\le (C + C_{N, \alpha}) \E(R).$$
\end{Lemma}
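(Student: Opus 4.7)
The approach is to use the second-moment technology of \cite{bbs} (Proposition 18 there, or equivalently the many-to-two formula used in its proof) adapted to the shifted barrier at $L-\alpha$, and then to absorb the resulting quadratic term $(\E R)^2$ into $\E R$ using the fact that the hypothesis $L-x\to\infty$ forces $\E R\to 0$. Following the structure of Lemma \ref{L: E(R)}, split $R = R_1 + R_2$, where $R_1 = R^{(N)}([0,\tau_N])$, $R_2 = R^{(N)}([\tau_N,\infty))$, and $\tau_N = \theta_N(\log N)^2$ with $1\ll\theta_N\ll \log N$; then $\E R^2 \le 2\E R_1^2 + 2\E R_2^2$.

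For the main term $\E R_2^2$, condition on $\cF_{\tau_N}$ and apply the analog of Lemma \ref{L:Prop1618} with parameter $A = \sqrt{2}\,\alpha$. This yields $\E[R_2^2 \mid \cF_{\tau_N}] \le K_1 V_N^2 + K_2 V_N + o_p(1)$, where $V_N = Z_N(\tau_N)/(N(\log N)^2)$ with $Z_N$ defined relative to the barrier $L-\alpha$, and $K_1,K_2$ may depend on $\alpha$. The $o_p(1)$ becomes $o(1)$ because our initial condition is deterministic and $Y_N(0)/(N(\log N)^3) \sim e^{\mu(x-L)} \to 0$ as $N\to\infty$. Taking expectations, using the calculation of $\E V_N$ from the proof of Lemma \ref{L: E(R)} (and a second application of the same second-moment estimates to control $\E V_N^2$), one obtains $\E R_2^2 \le K_1'(\E R)^2 + K_2' \E R + o(1)$. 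The short-time contribution $\E R_1^2$ is controlled by a many-to-two computation on $[0,\tau_N]$ combined with the supermartingale bound \eqref{hy2}; since \eqref{hy2} already shows that $\E R_1 = o(\E R)$, pairs of early arrivals are even rarer, and one checks that $\E R_1^2 = o(\E R)$.

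Combining these, $\E R^2 \le K_1'(\E R)^2 + K_2'\E R + o(\E R)$. Because the explicit formula for $\E R$ in Lemma \ref{L: E(R)}, together with the hypothesis $L - x \to \infty$, gives $\E R \to 0$, the quadratic term $(\E R)^2$ is itself $o(\E R)$, and we conclude $\E R^2 \le (C + o(1))\E R$. The main obstacle is verifying that $C = K_2'$ can be chosen independent of $\alpha$ for $\alpha\ge 1$: one must track the $\alpha$-dependence of the constants in the variant of Proposition 18 of \cite{bbs} used here, and check that they match, up to an $\alpha$-uniform factor, the coefficient $e^{\sqrt 2 \alpha}/(\pi\sqrt 2\, \alpha)$ appearing in the asymptotic for $\E R$. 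Heuristically, the ratio $\E R^2/\E R$ measures the typical size of $R$ on the event $\{R\ge 1\}$, which is governed by the local branching just before a particle reaches $L-\alpha$ and so should be unaffected by $\alpha$; making this rigorous is where the technical work lies.
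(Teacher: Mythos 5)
Your approach is genuinely different from the paper's. The paper uses the exact pair-counting identity $R^2 = R + 2Y^x$, where $Y^x$ is the number of unordered pairs of particles that hit $L-\alpha$, and computes $\E[Y^x]$ directly via a many-to-two formula: $\E[Y^x] = \int_0^\infty \int_0^{L-\alpha} q_t(x,y) h(y)^2 \,dy\,dt$, with $q_t$ the one-particle density and $h(y)$ the expected hitting number from $y$. It then splits this integral by time into $T_1$ (short times) and $T_2$ (long times), bounds $h(y)$ via Lemma \ref{L: E(R)} and the supermartingale estimate \eqref{hy2}, and shows each piece is controlled by a constant (independent of $\alpha$ for $T_2$) times $\E R$, with $T_1/\E R \to 0$. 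Your proposal instead splits $R$ by time, conditions on $\cF_{\tau_N}$, and invokes the $Z_N$-based second-moment machinery of Proposition 18 of \cite{bbs}. Both routes ultimately reduce to the same many-to-two technology, but the paper's version stays at the level of single-particle densities throughout, which keeps the $\alpha$-dependence and the comparison to $\E R$ explicit.

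Your version has several unresolved gaps. First, after applying the analog of Lemma \ref{L:Prop1618} at time $\tau_N$ and taking expectations, you land on $\E R_2^2 \le K_1'(\E R)^2 + K_2'\E R + o(1)$, but the desired conclusion needs the additive error to be $o(\E R)$, not merely $o(1)$; since $\E R \to 0$ this is a genuinely stronger requirement, and you do not verify it. Related to this, the $o_p(1)/o(1)$ term in Proposition 18 is governed by $Y_N$ evaluated at the conditioning time, so you need control of $Y_N(\tau_N)$, not $Y_N(0)$ as you wrote; this in turn requires a further estimate (in the spirit of Lemma 20 of \cite{bbs}) that you have not supplied. Second, you need $\E[V_N^2]$ with $V_N = Z_N(\tau_N)/(N(\log N)^2)$, which is a second-moment estimate for $Z_N$ at a divergent time and is not obviously reducible to what you have; you wave at ``a second application of the same second-moment estimates,'' but this is essentially the same difficulty you set out to solve, so the argument risks circularity. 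Third, Proposition 18 of \cite{bbs} concerns a time window of length $(\log N)^3$, while $R_2$ counts arrivals over all of $[\tau_N,\infty)$, so an iteration or a separate tail argument is needed. Finally, you correctly flag the $\alpha$-uniformity of $C$ as the main technical burden, but your proposal leaves it entirely undone, whereas the paper obtains it by explicitly tracking $\alpha$ through the bounds \eqref{T1}, \eqref{T1b}, \eqref{T2b}.
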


\begin{proof}[Proof of Lemma \ref{L:E(R /R>0)}]

We follow the proof of Proposition 18 in \cite{bbs}.
Here we have the slightly simpler decomposition $R^2 =R+2 Y^x$ where $Y^x$ is the number of distinct pairs of
particles that ever hit $L-\alpha.$

Define $q_t(x,y)$ so that if there is initially one particle at $x$, the expected number of particles in a set $B \subset (0, L - \alpha)$ at time $t$ is given by $\int_B q_t(x,y) \: dy$.  Let $h(y)$ be the expected number of offspring of a single particle at $y$ that
hit $L - \alpha$.  Note that if at time $t$ a branching event causes a particle at $y$ to split into two particles, then there will be on average $h(y)^2$ distinct pairs of particles that hit $L - \alpha$ and have their most recent common ancestor at time $t$.  It follows that
\begin{align}
\E[Y^x] &= \int_0^{\infty} \int_0^{L - \alpha} q_t(x,y) h(y)^2 \: dy \: dt \nonumber \\
&= \int_0^{(\log N)^2} \int_0^{L - \alpha} q_t(x,y) h(y)^2 \: dy \: dt + \int_{(\log N)^2}^{\infty} \int_0^{L - \alpha} q_t(x,y) h(y)^2 \: dy \: dt  \nonumber \\
&= T_1 + T_2. \nonumber
\end{align}
To bound $h(y)$, we use (\ref{newERN}) and (\ref{hy2}) with $y$ in place of $x$ and $\theta_N = 1$ for all $N$.  In this case, $E_{N,t}$ is still bounded by a constant, and we get
\begin{equation}\label{hyeq}
h(y) \leq \frac{C}{N (\log N)^2} \cdot \frac{e^{\sqrt{2} \alpha}}{\pi \sqrt{2} \alpha} \cdot e^{\mu y} \sin \bigg( \frac{\pi y}{L - \alpha} \bigg) + \frac{C y e^{\mu y} e^{\mu \alpha}}{N (\log N)^4},
\end{equation}
where here and throughout the proof, the value of the constant $C$ may change from line to line.
Define $v_t(x,y)$ so that if a Brownian particle (without drift or branching) is started at $x$ and is killed upon reaching $0$ or $L - \alpha$, then the probability that the particle is in a set $B \subset (0, L - \alpha)$ at time $t$ is given by $\int_B v_t(x,y) \: dy$.
To bound $T_1$, we use the fact (see equation (28) of \cite{bbs}) that $q_t(x,y) \leq C e^{\mu(x - y)} v_t(x,y)$ for $t \leq (\log N)^2$ to get
\begin{align}
T_1 &\leq \int_0^{(\log N)^2} \int_0^{L - \alpha} e^{\mu(x - y)} v_t(x,y) \bigg( \frac{C}{N (\log N)^2} \cdot \frac{e^{\sqrt{2} \alpha}}{\pi \sqrt{2} \alpha} \cdot e^{\mu y} \sin \bigg( \frac{\pi y}{L - \alpha} \bigg) + \frac{C y e^{\mu y} e^{\mu \alpha}}{N (\log N)^4} \bigg)^2 \: dy \: dt \nonumber \\
&\leq C e^{\mu x} \int_0^{L - \alpha} \bigg( \int_0^{(\log N)^2} v_t(x,y) \: dt \bigg) \frac{e^{\mu y} e^{2 \sqrt{2} \alpha}}{N^2 (\log N)^4} \bigg( \frac{1}{\alpha^2} \sin \bigg( \frac{\pi y}{L - \alpha} \bigg)^2 + \frac{y^2}{(\log N)^4} \bigg) \: dy. \nonumber
\end{align}
Now using that $\int_0^{\infty} v_t(x,y) \: dt \leq 2x(L - \alpha - y)/(L - \alpha)$ from standard Green's function results for Brownian motion (see Section 2 of \cite{bbs}), we get
\begin{align}
T_1 &\leq \frac{C e^{\mu x} e^{2 \sqrt{2} \alpha}}{N^2 (\log N)^4} \int_0^{L - \alpha} \frac{x(L - \alpha - y)}{L - \alpha} e^{\mu y} \bigg( \frac{1}{\alpha^2} \sin \bigg( \frac{\pi y}{L - \alpha} \bigg)^2 + \frac{y^2}{(\log N)^4} \bigg) \: dy. \nonumber
\end{align}
The integral is dominated by values of $y$ near $L - \alpha$, for which the term in parentheses contributes $1/(\log N)^2$ and $(L - \alpha - y)/(L - \alpha)$ contributes another $1/(\log N)$.  Therefore, the integral is of the order $x e^{\mu(L - \alpha)}/(\log N)^3 \leq
C x N e^{-\sqrt{2} \alpha}$, and we get
\begin{equation}\label{T1}
T_1 \leq \frac{Cx e^{\mu x} e^{\sqrt{2} \alpha}}{N (\log N)^4}.
\end{equation}

It remains to bound $T_2$.  For this we use that for $t \geq (\log N)^2$, we have $$q_t(x,y) \leq \frac{C}{L} e^{-2 \sqrt{2} \pi^2 \alpha t/(\log N)^3} e^{\mu x} \sin \bigg( \frac{\pi x}{L - \alpha} \bigg) e^{-\mu y} \sin \bigg( \frac{\pi y}{L - \alpha} \bigg)$$
(see Lemma 5 and equation (38) of \cite{bbs}).
We write $T_2 \leq C(T_{2,a} + T_{2,b})$, where $T_{2,a}$ comes from bounding $h(y)^2$ by the square of the first term on the right-hand side of (\ref{hyeq}) and $T_{2,b}$ comes from bounding $h(y)^2$ by the square of the second term on the right-hand side of (\ref{hyeq}).  Now
\begin{align}\label{T1b}
T_{2,a} &\leq \frac{C}{N^2 (\log N)^4} \bigg( \frac{e^{\sqrt{2} \alpha}}{\pi \sqrt{2} \alpha} \bigg)^2 \cdot \frac{1}{L} e^{\mu x} \sin \bigg( \frac{\pi x}{L - \alpha} \bigg) \nonumber \\
&\hspace{.5in}\times \int_0^{\infty} \int_0^{L - \alpha} e^{\mu y} \sin \bigg( \frac{\pi y}{L - \alpha} \bigg)^3 e^{-2 \sqrt{2} \pi^2 \alpha t/(\log N)^3} \: dy \: dt \nonumber \\
&\leq \frac{C}{N^2 (\log N)^5} \bigg( \frac{e^{\sqrt{2} \alpha}}{\pi \sqrt{2} \alpha} \bigg)^2 e^{\mu x} \sin \bigg( \frac{\pi x}{L - \alpha} \bigg) \cdot \frac{(\log N)^3}{\alpha} \cdot
\frac{e^{\mu (L - \alpha)}}{(\log N)^3} \nonumber \\
&\leq \frac{C}{N (\log N)^2} \cdot \frac{e^{\sqrt{2} \alpha}}{\alpha^3} \cdot e^{\mu x} \sin \bigg( \frac{\pi x}{L - \alpha} \bigg).
\end{align}
and
\begin{align}\label{T2b}
T_{2,b} &\leq \frac{C}{N^2 (\log N)^8} e^{2 \mu \alpha} \cdot \frac{1}{L} e^{\mu x} \sin \bigg( \frac{\pi x}{L - \alpha} \bigg) \nonumber \\
&\hspace{.5in}\times \int_0^{\infty} \int_0^{L - \alpha} e^{\mu y} y^2 \sin \bigg( \frac{\pi y}{L - \alpha} \bigg) e^{-2 \sqrt{2} \pi^2 \alpha t/(\log N)^3} \: dy \: dt \nonumber \\
&\leq \frac{C}{N^2 (\log N)^9} e^{2 \mu \alpha} \cdot e^{\mu x} \sin \bigg( \frac{\pi x}{L - \alpha} \bigg) \cdot \frac{(\log N)^3}{\alpha} \cdot \frac{N (\log N)^3 \cdot (\log N)^2 e^{-\mu \alpha}}{\log N} \nonumber \\
&= \frac{C}{N (\log N)^2} \cdot \frac{e^{\mu \alpha}}{\alpha} \cdot e^{\mu x} \sin \bigg( \frac{\pi x}{L - \alpha} \bigg).
\end{align}

Recall that Lemma \ref{L: E(R)} gives the expected number of particles that reach $L - \alpha$.  By comparing this expectation with the bounds in (\ref{T1b}), and (\ref{T2b}), we get that $T_2 \leq C \E[R]$.  Note that all the constants in the bounds of $T_1$ and $T_2$ above are independent of $\alpha$ over the range $\alpha \geq 1$.  Furthermore, because $L - x \gg 1$, it follows from (\ref{T1}) that $T_1/\E[R] \rightarrow 0$ as $N \rightarrow \infty$ for each fixed $\alpha$.  These observations imply the statement of the Lemma.
\end{proof}

When $\alpha$ is large, the probability that an individual particle at $L - \alpha$ has descendants that survive forever is small, meaning the number of such particles will likely either be $0$ or $1$ and the bound from Markov's Inequality should be precise.
Therefore we have the following Lemma.

\begin{Lemma} \label{L: Qmu(x) = E(R) Qmu(L-alpha)}
We have $$Q_{\mu}(x) = \frac{e^{\sqrt{2} \alpha}}{\pi \sqrt{2} \alpha} \cdot e^{\mu x} \sin \bigg( \frac{\pi x}{L} \bigg) \cdot \frac{1}{N (\log N)^2} \cdot Q_{\mu}(L - \alpha) (1 + C_{N,\alpha} + o(\alpha^{-1})),$$
where 
$o(\alpha^{-1})$ denotes a term that tends to zero as $\alpha \rightarrow \infty$ and does not depend on $N$ while for each fixed $\alpha, C_{N,\alpha} \to 0$ as $N\to \infty.$
\end{Lemma}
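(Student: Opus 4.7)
The overarching plan is to exploit the branching property: if $R$ denotes the number of particles that reach $L-\alpha$, then each such particle independently starts a fresh copy of the branching Brownian motion initialised at $L-\alpha$, and the original process becomes extinct if and only if each of these $R$ subtrees dies out. Writing $p=Q_{\mu}(L-\alpha)$, this yields
\begin{equation*}
Q_\mu(x)\;=\;\E\bigl[\,1-(1-p)^R\,\bigr].
\end{equation*}

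First I would sandwich the integrand by the elementary Bonferroni-type inequality $rp-\binom{r}{2}p^2\le 1-(1-p)^r\le rp$, valid for nonnegative integers $r$ and $p\in[0,1]$. Taking expectations and using $\E[R(R-1)]\le \E[R^2]\le (C+C_{N,\alpha})\E[R]$ from Lemma~\ref{L:E(R /R>0)} yields
\begin{equation*}
p\,\E[R]\,\Bigl(1-\tfrac{p(C+C_{N,\alpha})}{2}\Bigr)\;\le\;Q_\mu(x)\;\le\;p\,\E[R],
\end{equation*}
so $Q_\mu(x)=p\,\E[R]\,(1+\eta)$ with $|\eta|\le p(C+C_{N,\alpha})/2$. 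Next I would insert the expression for $\E[R]$ given by Lemma~\ref{L: E(R)}; the factor $\sin(\pi x/(L-\alpha))$ appearing there can be replaced by $\sin(\pi x/L)$ at the cost of a further multiplicative $1+C'_{N,\alpha}$ error, since the difference of arguments is $\pi x\alpha/(L(L-\alpha))=O(\alpha/L)$ and the hypothesis $L-x\to\infty$ ensures that $\sin(\pi x/L)$ dominates this perturbation, so a Taylor estimate makes the correction $o(1)$ as $N\to\infty$ for fixed $\alpha$.

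Finally, to arrive at the claimed form of the error $1+C_{N,\alpha}+o(\alpha^{-1})$, I would control $p$ through Theorem~\ref{T:kpplim}: for $\alpha$ fixed, $p=Q_\mu(L-\alpha)\to\theta(-\alpha)$ as $N\to\infty$. Linearising \eqref{kolmeq} at the fixed point $0$ gives a double characteristic root $\sqrt{2}$, so $\theta(-\alpha)\sim c\,\alpha\,e^{-\sqrt{2}\alpha}$ as $\alpha\to\infty$; in particular $\theta(-\alpha)=o(\alpha^{-1})$. Consequently $pC=o(\alpha^{-1})$ uniformly once $N$ is sufficiently large (depending on $\alpha$), while $pC_{N,\alpha}$ together with the cross terms coming from $(1+C_N)(1+\eta)(1+C'_{N,\alpha})$ can be absorbed into a new $C_{N,\alpha}\to 0$. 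The main obstacle is precisely this last step: the branching identity and the two moment bounds already do all the heavy lifting, but producing the sharp $o(\alpha^{-1})$ rate—rather than merely $o_\alpha(1)$—requires the explicit decay of the critical travelling wave at $-\infty$, which is classical but external to the rest of the paper.
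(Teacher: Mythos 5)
Your decomposition is essentially the same as the paper's: you write $Q_\mu(x)=\E[1-(1-p)^R]$ and use the Bonferroni bounds $Rp-\binom{R}{2}p^2\le 1-(1-p)^R\le Rp$ together with the second-moment bound from Lemma~\ref{L:E(R /R>0)}, whereas the paper lets $S$ be the number of particles at $L-\alpha$ with an infinite line of descent (so $S\sim\text{Bin}(R,q)$ given $R$) and bounds $\E(S)-\P(S>0)=\E[Rq-(1-(1-q)^R)]\le q^2\E[R^2]$. These are the same estimate written in two different notations, and you correctly insert Lemma~\ref{L: E(R)} and absorb the $\sin(\pi x/(L-\alpha))$ versus $\sin(\pi x/L)$ discrepancy into a $C_{N,\alpha}$ term.

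There is, however, one genuine gap. You assert as a consequence of the branching property that ``the original process becomes extinct if and only if each of these $R$ subtrees dies out,'' but this is not a formal identity: one must show that the branching Brownian motion with drift $-\mu$ killed at $0$ \emph{and} at $L-\alpha$ dies out almost surely, so that survival of the unkilled process forces at least one particle to cross $L-\alpha$. This is not automatic (the process could in principle maintain a population forever strictly below $L-\alpha$), and the paper explicitly invokes the martingale argument from the proof of Lemma~\ref{L:ext_if_small}: the analogue of $\bar Z(s)=\sum_i e^{\mu\bar X_i(s)}\sin(\pi\bar X_i(s)/(L-\alpha))$ for particles killed at $0$ and $L-\alpha$ is a nonnegative martingale, hence converges a.s., and the limit can only be $0$; therefore all particles in the killed process eventually die, and the extinction event $\{$no particle reaches $L-\alpha\}\cap\{$process survives$\}$ is null. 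Without this step the identity $Q_\mu(x)=\E[1-(1-p)^R]$ is only an inequality $\ge$, which is the wrong direction for the upper bound on $Q_\mu(x)$. You should also note that $\E[R]<\infty$ (from Lemma~\ref{L: E(R)}) guarantees $R<\infty$ a.s., so the expression $(1-p)^R$ is well defined.

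Your final paragraph, invoking the classical tail $\theta(-\alpha)\sim c\,\alpha e^{-\sqrt2\alpha}$ of the critical KPP wave to obtain $q=o(\alpha^{-1})$, is in fact \emph{more} careful than the paper, which only says ``because $q\to 0$'' and thereby justifies no better than $o(1)$. Two remarks: (i) the paper itself derives precisely this wave asymptotic as a consequence of the present lemma in the proof of Theorem~\ref{T:survivalthm}, so citing it here introduces a mild circularity within the paper's internal logic, though it is of course legitimate since the asymptotic is classical and proved elsewhere; (ii) a closer look at the proof of Theorem~\ref{T:survivalthm} shows that an $o_\alpha(1)$ error term would suffice there, since the self-consistency argument only needs the correction factor to tend to $1$ as $\alpha\to\infty$ — so the $o(\alpha^{-1})$ in the statement is a slight overspecification and the external input you invoke is not strictly necessary for the downstream application.
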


\begin{proof}[Proof of Lemma \ref{L: Qmu(x) = E(R) Qmu(L-alpha)}]

Consider the particles that reach $L - \alpha$ when particles are stopped upon reaching this level, and let $S$ be the number of these particles that have an infinite line of descent.  Since each particle that reaches $L - \alpha$ has probability $Q_\mu(L-\alpha)$ of having an infinite line of descent, we have $\E(S)=\E(R)Q_\mu(L-\alpha)$, and more generally, conditional on $R$ the random variable $S$ is binomial $(R, q)$ with $q= Q_\mu(L-\alpha).$
It follows that
$$0 \le  \E(S) - \P(S>0) = \E(Rq -(1-(1-q)^R)) \le \E(R^2q^2) \le (C + C_{N, \alpha}) q^2 \E(R) = (C + C_{N, \alpha})q\E(S),$$
where we have used $Rq -(1-(1-q)^R) \le R^2q^2$ for the second inequality and Lemma \ref{L:E(R /R>0)} for the third inequality.  It follows that
$$
 \E(S) (1 - (C + C_{N, \alpha})q) \leq \P(S > 0) \leq \E(S).
$$
Because $q \rightarrow 0$ as $\alpha \rightarrow \infty$, we have
\begin{equation}\label{Sprob}
\P(S>0) = \E(R) q (1+ C_{N, \alpha} + o(\alpha^{-1})).
\end{equation}
The martingale argument in the proof of Lemma \ref{L:ext_if_small} implies that almost surely the process survives if and only if $S > 0$.  Consequently, equation (\ref{Sprob}) and Lemma \ref{L: E(R)} imply the result.
\end{proof}

\begin{proof}[Proof of Theorem \ref{T:survivalthm}.]
We now use Lemma \ref{L: Qmu(x) = E(R) Qmu(L-alpha)} to prove Theorem \ref{T:survivalthm}. Indeed letting $N \rightarrow \infty$ and using Theorem \ref{T:kpplim}, we get that  $$Q_{\mu}(x) \sim \frac{e^{\sqrt{2} \alpha} \theta(-\alpha)}{\pi \sqrt{2} \alpha} \cdot e^{\mu x} \sin \bigg( \frac{\pi x}{L} \bigg) \cdot \frac{1}{N (\log N)^2} \cdot (1 + o(\alpha^{-1})).$$  Since the left-hand side does not depend on $\alpha$, it follows that as $\alpha \rightarrow \infty$, the expression $\alpha^{-1} e^{\sqrt{2} \alpha} \theta(-\alpha)$ must tend to a limiting constant, which implies the result.
\end{proof}

\section{Proof of Proposition \ref{P:ic}}

In order to simplify the proof we choose $\alpha = 0$ (all arguments are easily adapted for a generic value of $\alpha$). Let $a>0$ be arbitrary and let $T= a (\log N)^2 $. Since $\alpha = 0$, we are assuming that there is initially one particle at $x=L$. Fix $y\ge 0$, and let $N_y$ be the number of particles ever touching $L-y$, if particles are killed upon touching this level.
Let $t_1\le \dots \le t_{N_y}$ denote the respective times at which these particles hit $L-y$. Let $\cF$ be the $\sigma$-field generated by $N_y$ and $t_1, \ldots, t_{N_y}$.
Observe that, conditionally on $\cF$, on the event $\cV$ that $t_{N_y} \le T$, we have that $Z_N(T)$ and $Y_N(T)$ are the sum of $N_y$ independent random variables.
More precisely, for $1\le i \le N_y$, let $(X^i_j(t))_{1\le j \le M^i(t)}$ denote the positions of the descendants of the $i$th particle hitting $L-y$, at time $t + t_i$. Let
\begin{equation}\label{Zi}
Z^i(t) = \sum_{j=1}^{M^i(t)} e^{\mu X^i_j(t)}\sin\left(\frac{\pi X^i_j(t)}L\right) \indic{X^i_j(t)\le L}
\end{equation}
and likewise, let
$$
Y^i(t) = \sum_{j=1}^{M^i(t)} e^{\mu X^i_j(t)}.
$$
Then we may write, on the event $\cV$,
$$
Z_N(T) = \sum_{i=1}^{N_y} Z^i(T-t_i), \  \ Y_N(T) = \sum_{i=1}^{N_y} Y^i(T-t_i),
$$
and observe that, given $\cF$, the summands are independent (but not identically distributed) random variables. Our first lemma shows that between times $t_{N_y}$ and $T$, with high probability no particle hits $L$.

Reasoning as in the proof of Lemma \ref{L:approxNy}, we may choose $y= y_N$ in such a way that $y_N \to \infty$ but  slowly enough that
\begin{equation}\label{E:Tysmall}
t_{N_y} / (\log N)^2 \to 0
\end{equation}
in probability.

\begin{Lemma}\label{newRlem}
\label{L:hitL} Let $R$ denote the number of particles touching $L$ between times $t_{N_y}$ and $T$. Then $\P(R\ge 1) \to 0$.
\end{Lemma}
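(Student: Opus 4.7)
The plan is to decompose $R$ over the $N_y$ subtrees launched by the particles that first reach $L-y$, bound the expected contribution of each subtree by a supermartingale argument, and close by Markov's inequality. First I would observe that on $\cV$ the auxiliary BBM killed at $L-y$ is fully extinct by time $t_{N_y}$ (its particles live in $(L-y,\infty)$ and can only be killed at $L-y$, so all $N_y$ killings have occurred by $t_{N_y}\le T$); hence every particle alive in the original BBM at any time $s\in[t_{N_y},T]$ descends from one of the $N_y$ particles that hit $L-y$ at times $t_1\le\cdots\le t_{N_y}$. By the strong branching property the resulting subtrees are i.i.d.\ conditionally on $\cF$; letting $\tilde R_i$ be the number of descendants of the $i$th such particle that reach $L$ within time $T$ of the subtree's launch, I obtain $R \le \sum_{i=1}^{N_y} \tilde R_i$.

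Next I would estimate $\E[\tilde R_1]$ by reusing the (super)martingale argument of Lemma~\ref{L: E(R)}. For a subtree started with one particle at $L-y$ and with particles stopped (not killed) upon reaching $L$, the process $V(t)=\sum_j X_j(t)\, e^{\mu X_j(t)+(\mu^2/2-1)t}$ of Lemma 6 of \cite{bbs} remains a supermartingale, so optional stopping at time $T$ gives
\[
(L-y)\, e^{\mu(L-y)} \;=\; V(0) \;\ge\; \E[V(T)] \;\ge\; \E[\tilde R_1]\cdot L\, e^{\mu L+(\mu^2/2-1)T},
\]
hence $\E[\tilde R_1]\le e^{-\mu y}\, e^{(1-\mu^2/2)T}$. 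Since $(1-\mu^2/2)T=\eps_N T/2\to a\pi^2$ is bounded and $y_N=o((\log N)^2)$ forces $(\sqrt 2-\mu)y_N\to 0$, this yields $\E[\tilde R_1] \le C e^{-\sqrt 2\, y}$ for a constant $C$ independent of $N$.

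Finally, since $\E[N_y]=\infty$ (as $W$ has infinite mean by Proposition 27 of \cite{bbs}), the naive bound $\E[R]\le \E[N_y]\,\E[\tilde R_1]$ is unavailable, so I would use instead the distributional convergence $N_y\, y\, e^{-\sqrt 2\, y}\to_d W$ from Lemma~\ref{L:approxNy}: for any $\delta>0$, pick $M$ with $\P(W>M)<\delta$ and restrict to the event $\cA=\cV\cap\{N_y\le M e^{\sqrt 2 y}/y\}$, which satisfies $\P(\cA^c)\le 2\delta$ for $N$ large. On $\cA$,
\[
\E[R\mid \cF]\;\le\; N_y\cdot \E[\tilde R_1] \;\le\; \frac{M C}{y_N} \;\longrightarrow\; 0,
\]
so Markov's inequality gives $\P(\{R\ge 1\}\cap \cA)\to 0$, whence $\limsup_N \P(R\ge 1)\le 2\delta$; letting $\delta\to 0$ concludes.

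The main obstacle is precisely the infinite mean of $N_y$, which rules out a direct one-step moment estimate: one has to pay a price to truncate $N_y$ at its typical order $e^{\sqrt 2 y}/y$ using the distributional limit, and then to verify that the surviving product with the crude per-subtree bound $\E[\tilde R_1]\lesssim e^{-\sqrt 2 y}$ is of order $1/y_N$, which vanishes thanks to the slow divergence $y_N\to\infty$ chosen in \eqref{E:Tysmall}.
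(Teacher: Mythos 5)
Your proof is correct and follows the same overall strategy as the paper's: decompose $R$ over the conditionally i.i.d.\ subtrees rooted at the $N_y$ first-passage particles, bound the per-subtree expectation by $Ce^{-\sqrt{2}y}$, truncate $N_y$ at order $e^{\sqrt{2}y}/y$ via the distributional convergence of Lemma~\ref{L:approxNy} (since $\E[N_y]=\infty$ rules out a direct first-moment bound), and close with Markov's inequality. The only divergence is in how you obtain the per-subtree bound: the paper invokes the first-moment estimate (68) of \cite{bbs} to get $\E(R\mid\cF)\le C N_y e^{-\mu y}$ directly, whereas you rederive an equivalent bound $\E[\tilde R_1]\le e^{-\mu y}e^{(1-\mu^2/2)T}$ from the supermartingale $V$ of Lemma~6 of \cite{bbs} --- a self-contained variant of the same computation, which the paper itself uses in the proof of Lemma~\ref{L: E(R)}.
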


\begin{proof}
This follows from the same sort of arguments as in Lemma \ref{L:Prop1618}. More precisely, for $1\le i \le N_y$ let $R^i(t)$ denote the number of descendants of the $i$th particle hitting $L-y$ that touch $L$ between times $t_i$ and $t_i +t$ if particles are killed when touching $L$. Then $R = \sum_{i=1}^{N_y} R^i(T-t_i) \le \sum_{i=1}^{N_y} R^i(T)$. Note that, conditionally given $\cF$, the random variables $R^i(T)$, $1\le i \le N_y$, are independent and identically distributed. 
Reasoning as in (68) in \cite{bbs} (or simply by using directly (68) if $a =1$), we get
\begin{equation*}
\E(R|\cF) \le C \frac{\sum_{i=1}^{N_y} Y^i(0)}{N(\log N)^3} = C N_y  \frac{e^{\mu (L-y)} }{N(\log N)^3} \le C N_y e^{-\mu y }.
\end{equation*}
By Lemma \ref{L:approxNy}, there exists a random variable $W$ such that $ye^{-\sqrt{2} y }N_y \to W$ in distribution. Fix $\delta >0$. Choose $K\ge 0$ large enough that $\P(W>K)\le \delta$. Thus
for $N$ large enough, $\P(\cV_0)\ge 1- 2\delta$ where $\cV_0 = \{ N_y \le K e^{\sqrt{2} y} /y\}$. On $\cV_0$, we see that
$$
\E(R|\cF) \le  C K y^{-1}.
$$
Thus by Markov's inequality, $\P(R\ge 1) \le \P(\cV_0^\complement) + CKy^{-1} \le 3 \delta$ for $N$ sufficiently large. This proves Lemma \ref{L:hitL}.
\end{proof}

Let $\cV_1 = \{R=0\}$. Then on $\cV_1$, we may identify $Z^i(t)$ for each $1\le i \le N_y$ with the random variable $\hat Z^i(t)$, where $\hat Z^i(t)$ is defined as $Z^i(t)$ in \eqref{Zi}  except that the sum is only over those particles whose ancestors between times $t_i$ and $t_i + t$ never hit $L$. Likewise, $Y^i(t)$ is equal to $\hat Y^i(t)$, where $\hat Y^i$ is defined in the analogous fashion.
Let $\hat Z_N(T) = \sum_{1\le i \le N_y} \hat Z^i(T-t_i)$, and define $\hat Y_N(T)$ analogously. Observe that, conditionally given $\cF$,  $\hat Z^i(t)$ is a martingale for each $1\le i \le N_y$. Thus we deduce that
\begin{equation}
\label{E:hatZind_exp}
\E(\hat Z^i(T-t_i)|\cF) =  Z^i(0) = e^{\mu (L-y)} \sin\left(\frac{\pi(L-y)}L\right).
\end{equation}
Moreover,
$$
\Var (\hat Z^i(T-t_i) |\cF) \le \E((\hat Z^i(T-t_i))^2|\cF),
$$
so using Lemma 9 in \cite{bbs}, we deduce, on the event $ \cV_2= \{t_{N_y} \le (a/2) (\log N)^2\}$, that
$$
\Var (\hat Z^i(T-t_i) |\cF) \le C e^{\mu (L-y)} e^{\mu L} \frac1{L^2} \le C N^2 L^4 e^{-\mu y}.
$$
Since the variables $\hat Z^i(T-t_i)$ are conditionally independent given $\cF$, we obtain that on $\cV_2$,
$$
\Var( \hat Z_N(T)|\cF) \le  C N_y N^2 L^4 e^{-\mu y}.
$$
Fix $\eta>0$. Let $\cV_\eta = \cV_2 \cap \{ W(1-\eta) \le ye^{-\mu y} N_y \le W(1+\eta) \}$.
Then on $\cV_\eta$,
we get that
\begin{equation}\label{E:Zexpvar}
\Var(\hat Z_N(T) |\cF) \le C W  \frac{N^2 L^4}y;  \ \ \E(\hat Z_N(T) |\cF)  =  \pi \sqrt{2} W N (\log N)^2 q,
\end{equation}
where $q \in (1-\eta, 1+\eta)$ almost surely on $\cV_\eta$. Since $y_N \to \infty$ and $\P(\cV_\eta) \to 1$ (due to (\ref{E:Tysmall})) it is now easy to deduce from Chebyshev's inequality that
\begin{equation*}
\frac{\hat Z_N(T)}{N( \log N)^2} \to \pi \sqrt{2} W
\end{equation*}
in distribution.  Because $\P(\cV_1) \rightarrow 1$ by Lemma \ref{newRlem} and $Z_N(T) = \hat Z_N(T)$ on $\cV_1$, it follows that
\begin{equation}\label{E:icZN}
\frac{Z_N(T)}{N( \log N)^2} \to \pi \sqrt{2} W
\end{equation}
in distribution.

Likewise, using (18) from \cite{bbs}, we see that on the event $\cV_2$, since $t_{N_y} \le (a/2) (\log N)^2$, so that $T-t_i \ge (a/2) (\log N)^2$ for each $1\le i\le N_y$ (and thus the term $E_2$ in (18) of \cite{bbs} is bounded),
$$
\E(\hat Y_N(T) | \cF) \le C N_y e^{\mu (L-y)} \sin\left(\frac{\pi(L-y)}L\right).
$$
and therefore we deduce that
\begin{equation}\label{E:icYN}
\frac{Y_N(T)}{N(\log N)^3} \to 0
\end{equation}
in probability. Since $N(\log N)^3 = \exp({\sqrt{2} L}) = \exp({\sqrt{2}\pi \eps^{-1/2}})$, this proves (\ref{E:icY}). Likewise, (\ref{E:icZ}) follows from (\ref{E:icZN}) since $N(\log N)^2  \sim \sqrt{2} L \exp({\sqrt{2} L}) = \pi \sqrt{2} \eps^{-1/2}  \exp({\sqrt{2}\pi \eps^{-1/2}})$. The proof of Proposition \ref{P:ic} is finished.

\bigskip
\noindent {\bf {\Large Acknowledgments}}

\bigskip
\noindent The authors thank Damien Simon for helpful discussions.  They also thank Elie Aidekon, Simon Harris, and Robin Pemantle for providing them with a preliminary version of the reference \cite{ahp}.

\end{document}